\documentclass[10pt]{amsart}
\usepackage{amscd,graphicx,epsfig}
\usepackage[colorlinks=true, pdfstartview=FitV, linkcolor=blue, citecolor=blue, urlcolor=blue]{hyperref}
\usepackage{amsmath}
\usepackage{amssymb}
\usepackage{comment}
\usepackage{amsmath}
\usepackage{mathtools}
\usepackage{hyperref}
\usepackage{subcaption}
\usepackage{graphicx}
\usepackage{tikz}
\usepackage{tikz-cd}
\usetikzlibrary{matrix,arrows,positioning,calc,chains}
\usepackage{enumitem}


\title[Characterization of affine $\mathbb{G}_m$-surfaces of hyperbolic type]{Characterization  of  affine $\mathbb{G}_m$-surfaces of hyperbolic type}
\author{Andriy Regeta}


\address{\noindent Institut f\"{u}r Mathematik, Friedrich-Schiller-Universit\"{a}t Jena, \newline
\indent  Jena 07737, Germany}
\email{andriyregeta@gmail.com}



\newtheorem{claim}{Claim}

\newtheorem{theorem}{Theorem}

\newtheorem{lemma}{Lemma}
\newtheorem{proposition}{Proposition}
\newtheorem{definition and proposition}{Definition and proposition}

\setcounter{page}{1}

\theoremstyle{definition}
\newtheorem{definition}{Definition}
\newtheorem{remark}{Remark}
\newtheorem{example}{Example}

\newcommand{\name}[1]{\textsc{#1\/}}



\DeclareMathOperator{\Ve}{Vec}

\DeclareMathOperator{\id}{id}

\DeclareMathOperator{\Aut}{Aut}

\DeclareMathOperator{\LND}{LNV}

\def\Dp{\mathrm{D}_p}

\def\Dz(z-1){\mathrm{D}_{z(z-1)}}
\def\Dq{\mathrm{D}_q}

\newcommand{\lielnd}[1]{\langle \LND(#1) \rangle}

\def\O{{\mathcal O}}

\def \itt #1,#2:{\medskip\item[$\bullet$] 
     page\ \ignorespaces#1, line\ \ignorespaces#2:\ \ignorespaces}

\begin{document}

\begin{abstract}
In this note we extend the result from \cite{LRU19} and prove that 
  if $S$ is an affine non-toric $\mathbb{G}_m$-surface of hyperbolic type that admits a $\mathbb{G}_a$-action and $X$ is an affine irreducible variety
  such that $\Aut(X)$ is isomorphic to $\Aut(S)$ as an abstract group, then $X$ is a $\mathbb{G}_m$-surface of hyperbolic type.
Further, we show
 that a smooth
Danielewski surface $\Dp = \{ xy = p(z) \} \subset \mathbb{A}^3$, where $p$ has no multiple roots, is determined by its automorphism group seen as an ind-group in the category of affine irreducible varieties. 

 
%
\end{abstract}

\maketitle



\section{Introduction and main results}


In this note we work over the field of complex numbers $\mathbb{C}$ and assume all varieties to be irreducible. We denote
the multiplicative and additive group of $\mathbb{C}$ respectively by $\mathbb{G}_m$ and $\mathbb{G}_a$, and call a variety that admits
a regular $\mathbb{G}_m$-action/$\mathbb{G}_a$-action to be a $\mathbb{G}_m$-variety/$\mathbb{G}_a$-variety.\par
Affine $\mathbb{G}_m$-surfaces appear in three types  with respect to the dynamical behavior of the $\mathbb{G}_m$-action:  \emph{parabolic}, corresponding to the case with  infinitely many fixed points,
\emph{elliptic},  where an attractive fixed point exists, and \emph{hyperbolic}, with finitely many non-attractive 
fixed points.
In addition, in the case of a non-toric affine $\mathbb{G}_m$-surface $S$, the dynamical type of $S$ does not depend on the choice of the
$\mathbb{G}_m$-action  (\cite[Corollary 4.3]{FZ05}).
\cite[Theorem 1.2]{LRU18} shows that if there exists a group isomorphism of
automorphism groups of
normal affine  $\mathbb{G}_m$-surfaces $S$ and $S'$ with $S$ being also a non-toric $\mathbb{G}_a$-surface, then $S$ and $S'$
have the same dynamical type.
We generalize  this statement for the hyperbolic dynamical type, which can be viewed as a rigidity result,
and observe that it does not extend to parabolic and elliptic dynamical types in Remark \ref{remarkellipticandparabolic}.

\begin{theorem}\label{main2}
 Let $S$ and $X$ be   affine  $\mathbb{G}_m$-varieties  with $S$ being also a normal non-toric  $\mathbb{G}_a$-surface.  If there exists a group isomorphism 
$\varphi\colon \Aut(S) \to \Aut(X)$, 
  then $\dim X =2$. Moreover, if $X$  is normal, then $X$ is also a hyperbolic non-toric $\mathbb{G}_m$-surface.
\end{theorem}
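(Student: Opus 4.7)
The plan is to reduce the theorem to \cite[Theorem 1.2]{LRU18}, which already establishes the preservation of the hyperbolic dynamical type under abstract isomorphisms between automorphism groups of normal affine $\mathbb{G}_m$-surfaces. Since $X$ is assumed from the outset to be a $\mathbb{G}_m$-variety, the only missing ingredient before that result can be invoked is the dimension bound $\dim X = 2$.

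To prove $\dim X = 2$, I would exploit the rigidity of $\Aut(S)$. Because $S$ is a non-toric hyperbolic $\mathbb{G}_m$-surface that admits a nontrivial $\mathbb{G}_a$-action, $\Aut(S)$ is an infinite-dimensional ind-group in which the centralizers and normalizers of $\mathbb{G}_a$- and $\mathbb{G}_m$-subgroups are tightly constrained; in particular, $\SAut(S)$ acts with $1$-dimensional generic orbits on $S$, and these geometric features translate into abstract group-theoretic relations that are inherited by $\Aut(X)$ through $\varphi$. The case $\dim X \leq 1$ is incompatible with the infinite-dimensional character of $\Aut(S)$, since the automorphism group of an affine curve is a finite-dimensional algebraic group. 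The case $\dim X \geq 3$ is ruled out by the appearance in $\Aut(X)$ of either large commuting families of $\mathbb{G}_a$-subgroups or tori of rank exceeding what $\Aut(S)$ abstractly accommodates; the techniques of \cite{LRU19}, which characterize algebraic one-parameter subgroups via their centralizers and the locally finite part of $\Aut$, provide the natural framework for this step.

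Once $\dim X = 2$ is known and $X$ is normal, the given $\mathbb{G}_m$-action makes $X$ a normal affine $\mathbb{G}_m$-surface, and \cite[Theorem 1.2]{LRU18} immediately yields hyperbolicity. The non-toric conclusion is handled separately: a toric affine surface carries an effective $\mathbb{G}_m \times \mathbb{G}_m$-action, so $\Aut(X)$ would contain a $2$-torus as an algebraic subgroup, which is impossible because $\Aut(S)$ contains no such subgroup (the non-toric assumption on $S$ manifests group-theoretically as the absence of a sufficiently large centralizer of $\mathbb{G}_m \subset \Aut(S)$).

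The principal obstacle is the dimension bound itself. Extracting $\dim X \leq 2$ from an abstract group isomorphism requires isolating group-theoretic invariants of $\Aut$ that detect the dimension of the underlying variety, and explicitly excluding $\dim X \geq 3$ in the absence of any direct geometric handle on $X$ a priori. I expect this to mirror, and slightly extend, the centralizer- and normalizer-based arguments already developed in \cite{LRU19}.
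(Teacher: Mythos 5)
Your overall architecture matches the paper's at the endpoints: the last step is indeed to feed the established facts into \cite[Theorem 1.2]{LRU18} (and \cite[Theorem 1.3]{LRU18} for non-toricity), and the exclusion of $\dim X\le 1$ is handled in a similar spirit. But the heart of the theorem is the bound $\dim X\le 2$, and here your proposal has a genuine gap rather than a proof. Your stated mechanism for excluding $\dim X\ge 3$ --- that $\Aut(X)$ would then contain ``tori of rank exceeding what $\Aut(S)$ abstractly accommodates'' or ``large commuting families of $\mathbb{G}_a$-subgroups'' --- is not a valid implication: a high-dimensional affine $\mathbb{G}_m$-variety need not carry any algebraic subgroups in its automorphism group beyond the given $\mathbb{G}_m$, so there is nothing to transport back to $\Aut(S)$ for a contradiction. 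The logic has to run in the opposite direction: one must push the algebraic subgroups of $\Aut(S)$ \emph{forward} through the purely abstract isomorphism $\varphi$ and show they remain algebraic, which is exactly the technical content you are missing.

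Concretely, the paper fixes a subtorus $T\subset\Aut(S)$ and a root subgroup $U$ with respect to $T$, and proves in several steps that $\varphi(U)$ is closed (via the identity $U=\{u : t_0\circ u\circ t_0^{-1}=u^2\}$, which crucially uses that all root subgroups of a hyperbolic surface have distinct weights, Lemma \ref{hyperbolic}), that $\varphi(U)\simeq\mathbb{G}_a$ and $\varphi(T)\simeq\mathbb{G}_m$ as algebraic subgroups of $\Aut(X)$ (using the divisible-implies-algebraic equivalence of Proposition \ref{divisiblealgebraic}, the filtration result Proposition \ref{CRX}, and Lemma \ref{LRULemma4.15}), and finally that $\varphi(T)$ acts multiplicity-freely on $\mathcal{O}(X)^{\varphi(U)}$ --- again a consequence of the distinct-weights property on the $S$-side, applied to the modifications $f\cdot\varphi(U)$ and $g\cdot\varphi(U)$. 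The dimension bound then drops out of Kraft's Lemma \ref{Kr15local}: $\dim X\le\dim\varphi(T)+1=2$. None of this is captured by centralizer/normalizer rigidity in the loose sense you invoke, and your own closing admission that the dimension bound is ``the principal obstacle'' correctly identifies that the essential idea --- multiplicity-freeness of the invariant ring plus Kraft's lemma, with hyperbolicity of $S$ doing the work at two separate places --- is absent from the proposal.
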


 \begin{remark}\label{remarkellipticandparabolic}
 The only elliptic $\mathbb{G}_m$-surfaces $S$ that admit a $\mathbb{G}_a$-action are toric, thus Theorem \ref{main2}  does not extend to this case. Indeed,  by \cite[Lemma 1.10]{Lie10} $S$  admits a root subgroup (see Section \ref{rootsubgroups}) in its automorphism group with respect to a subtorus of $\Aut(S)$ induced by the $\mathbb{G}_m$-action.
Hence, by \cite[Theorem 3.3]{FZ05b} $S$ is
toric.
Moreover, at the end of this note, Example \ref{exampleparabolic}  demonstrates 
that Theorem \ref{main2} does not extend to the parabolic case.
\end{remark}

One of the most important class of hyperbolic $\mathbb{G}_m$-surfaces are Danielewski surfaces which were originally studied in \cite{Dan89}  with the aim
to find a counterexample for the generalized version of Zariski Cancellation Problem. 
A smooth Danielewski surface $\Dp$ is $\{  (x,y,z) \in \mathbb{A}^3 \mid xy = p(z) \}$, where $p$ is a polynomial without multiple roots.

\begin{remark}\label{Daigle}
%
 \cite[Lemma 2.10]{Da04}  shows that  $\Dp$ and $\Dq$ are isomorphic if and only if there exists an automorphism  $F$ of $\mathbb{C}[z]$ such that $\frac{F(p)}{q} \in \mathbb{C^*}$.  In particular if $\deg p = \deg q =2$, then $\Dp$ and $\Dq$ are isomorphic.
\end{remark}

We call a surface $\Dp$ \emph{generic} if there is no affine 
 automorphism of the affine line $\mathbb{C}$ that permutes the roots of $p$.
For two generic surfaces $\Dp$ and $\Dq$  with $\deg p \ge 3$ and $\deg q \ge 3$ there is an isomorphism 
$\Aut(\Dp)  \xrightarrow{\sim} \Aut(\Dq)$ of abstract groups.
Indeed,  in
\cite[Theorem and Remark (3) on p. 256]{ML90} and more precisely in \cite[Theorem 2.7]{KL16}
 it is shown that for a generic Danielewski surface
$\Dp$, we have 
$\Aut(\Dp) \simeq (\mathbb{C}[x] \ast \mathbb{C}[y]) \rtimes (\mathbb{G}_m
 \rtimes \mathbb{Z}/2\mathbb{Z})$ and the semidirect product
structure does not depend on $p(z)$ (see also \cite[Remark 7]{LR20}). 
On the other hand, by \cite[Theorem 3]{LR20} $\Aut(\Dp)$ and $\Aut(\Dq)$ are isomorphic as ind-groups (see Section \ref{Ind-groups} for details) if and only if $\Dp$ is isomorphic to $\Dq$ as a variety. 
In this paper we  prove  the following
result.

 \begin{theorem}\label{main1}
 Let $X$ be an affine irreducible variety. Assume $\Aut(X)$ is isomorphic to $\Aut(\Dp)$ as an ind-group, then $X$ is isomorphic to $\Dp$ as a variety.
 \end{theorem}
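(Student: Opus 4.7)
The plan is to reduce Theorem~\ref{main1} to \cite[Theorem 3]{LR20}, which distinguishes smooth Danielewski surfaces from one another via their ind-group automorphism groups. Therefore it suffices to prove that $X$ is itself isomorphic to some smooth Danielewski surface $\Dq$, since then $\Aut(\Dp)\simeq\Aut(X)\simeq\Aut(\Dq)$ as ind-groups, and the cited result will yield $\Dp\simeq\Dq\simeq X$. First, since any ind-group isomorphism is in particular a group isomorphism, and since $\Dp$ is a normal non-toric hyperbolic $\mathbb{G}_m$-surface that admits a $\mathbb{G}_a$-action, Theorem~\ref{main2} immediately gives $\dim X = 2$.

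Second, I would use the ind-group hypothesis (and not only the underlying abstract-group isomorphism) to upgrade this to normality of $X$. The ind-group isomorphism $\varphi\colon\Aut(\Dp)\isorightarrow\Aut(X)$ sends closed algebraic subgroups to closed algebraic subgroups, so the maximal torus of $\Aut(\Dp)$ is transported to a one-dimensional torus acting effectively on $X$, and the many one-parameter unipotent subgroups coming from the locally nilpotent derivations $x^n\dy$ and $y^n\dx$ (and their twists by the $\mathbb{G}_m$-action) give rise to $\mathbb{G}_a$-actions on $X$. Since $\Dp$ is flexible, $\SAut(\Dp)$ acts infinitely transitively on it, and transporting this structure through $\varphi$ yields a $\SAut(X)$-orbit of maximal dimension that must lie in $X_{\mathrm{smooth}}$ by the flexibility/smoothness principle of Arzhantsev--Flenner--Kaliman--Kutzschebauch--Zaidenberg. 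Combined with $\dim X = 2$, this forces $X$ to be normal (in fact smooth), and a second application of Theorem~\ref{main2} now yields that $X$ is a hyperbolic non-toric $\mathbb{G}_m$-surface.

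Third, I would recognize $X$ as a smooth Danielewski surface. The two natural projections $\Dp\to\mathbb{A}^1$ along $x$ and $y$ correspond to two non-conjugate root subgroups of $\Aut(\Dp)$ attached to the opposite extremal rays in the combinatorial description of $\Dp$. Their images under $\varphi$ yield two $\mathbb{G}_a$-actions on $X$ whose algebraic quotients are two distinct $\mathbb{A}^1$-fibrations $\pi_1,\pi_2\colon X\to\mathbb{A}^1$. Using the Flenner--Zaidenberg description of normal affine hyperbolic $\mathbb{G}_m$-surfaces equipped with such a pair of fibrations (equivalently, a ps-divisor with two extremal rays supporting $\mathbb{G}_a$-actions), $X$ is realized as $\{xy=q(z)\}\subset\mathbb{A}^3$ for some polynomial $q$; smoothness of $X$ then forces $q$ to have no multiple roots, so $X\simeq\Dq$. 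Applying \cite[Theorem 3]{LR20} to the ind-group isomorphism $\Aut(\Dp)\simeq\Aut(\Dq)$ gives $\Dp\simeq\Dq\simeq X$.

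The main obstacle is the normality/smoothness step: passing from the purely ind-group-theoretic data to honest geometric transitivity on $X$ requires a minimal dictionary between the ind-group $\Aut(X)$ and the variety $X$ (one must check, for instance, that the transported $\mathbb{G}_a$-actions are effective and that the images of two non-commuting $\mathbb{G}_a$-subgroups of $\Aut(\Dp)$ generate an open orbit on $X$, not merely a group-theoretic subgroup of $\Aut(X)$). An alternative route avoiding the flexibility framework would be to reconstruct the coordinate ring $\mathbb{C}[X]$ directly from the Lie algebra of algebraic vector fields on $X$ obtained from the transferred $\mathbb{G}_a$- and $\mathbb{G}_m$-subgroups, and identify it with $\mathbb{C}[\Dp]$.
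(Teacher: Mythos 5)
Your overall skeleton (get $\dim X=2$ from Theorem~\ref{main2}, establish smoothness, identify $X$ with some $\Dq$, and finish with \cite[Theorem 3]{LR20}) matches the paper's, and the first and last steps are exactly what the paper does. The genuine gap is your smoothness step, and you correctly flag it yourself as the main obstacle without closing it. An isomorphism of ind-groups $\varphi\colon\Aut(\Dp)\to\Aut(X)$ carries no underlying map of varieties $\Dp\to X$, so infinite transitivity of $\SAut(\Dp)$ on $\Dp$ cannot be ``transported'' to transitivity of $\SAut(X)$ on any subset of $X$: the images $\varphi(U)$ of the $\mathbb{G}_a$-subgroups are again $\mathbb{G}_a$-subgroups, but you have no control over how their orbits sit inside $X$, so no open orbit is produced. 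Even granting an open orbit contained in $X_{\mathrm{smooth}}$, this would not exclude singular points in the complement, which is what you actually need. The paper closes this step by a purely Lie-algebraic argument: by Lemma~\ref{vectorfields} the ind-group isomorphism induces an isomorphism $d\varphi\colon\lielnd{\Dp}\to\lielnd{X}$; by \cite[Theorem 1]{LR20} the Lie algebra $\lielnd{\Dp}$ is simple; and Proposition~\ref{non-simplicity} (a Siebert-type argument: vector fields preserve the ideal $I$ of the singular locus, so $\{\nu : \nu(\mathcal{O}(X))\subset I^k\}$ is a proper nonzero ideal of $\lielnd{X}$) shows that a singular $X$ with nontrivial $\lielnd{X}$ cannot have simple $\lielnd{X}$. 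Hence $X$ is smooth. This is the key idea your proposal is missing, and it is where the ind-group (rather than abstract group) hypothesis is really used.

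Your third step also diverges from the paper and is left too sketchy to assess: the paper does not reconstruct $X$ as $\{xy=q(z)\}$ by hand from a pair of $\mathbb{A}^1$-fibrations and the Flenner--Zaidenberg machinery, but simply invokes \cite[Theorem 1]{LRU19}, which characterizes Danielewski surfaces among smooth affine surfaces by their automorphism groups. If you want to avoid that citation, you would have to justify carefully that a smooth affine hyperbolic $\mathbb{G}_m$-surface admitting two such root subgroups is of the form $\{xy=q(z)\}$ (beware of the surfaces $\{x^ny=q(z)\}$ and more general Danielewski-type surfaces in this analysis); as written this is an assertion, not an argument.
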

 
\subsection*{Acknowledgements}
I am thankful to Roland Pucek who has read the text and suggested a number of  improvements.


\section{Preliminaries}\label{Preliminaries}

    \subsection{Ind-groups}\label{Ind-groups}
The notion of an \emph{ind-group} goes back to \name{Shafarevich}, who called them infinite dimensional groups, see \cite{Sh66}. We refer to  \cite{FK15} and \cite{KPZ15}   for basic notions. 

\begin{definition}
 An \emph{ind-variety}  is  a set $V$ together with an ascending filtration $V_0 \subset V_1 \subset V_2 \subset ... \subset V$ such that the following is satisfied:

(1) $V = \bigcup_{k \in \mathbb{N}} V_k$;

(2) each $V_k$ has the structure of an algebraic variety;

(3) for all $k \in \mathbb{N}$ the subset $V_k \subset V_{k+1}$ is closed in the Zariski  topology.
\end{definition}

A \emph{morphism} between two ind-varieties $V = \bigcup_k V_k$ and $W = \bigcup_l W_l$ is a map $\phi\colon V \to W$ such that for each $k \in \mathbb{N}$ there is an $l \in \mathbb{N}$ such that 
$\phi(V_k) \subset W_l$ and such that the induced map $V_k \to W_l$ is a morphism of algebraic varieties. \emph{Isomorphisms} of ind-varieties are defined in the usual way.

An ind-variety $V = \bigcup_k V_k$ has a natural topology: a subset $S \subset V$ is called \emph{closed}, respectively \emph{open}, if $S_k := S \cap  V_k \subset V_k$ is closed, respectively open, for all $k$.  A closed subset $S \subset V$ has the natural structure of an ind-variety. It is called an \emph{ind-subvariety}. An ind-variety $V$ is called \emph{affine} if each variety $V_k$ is affine. In the sequel we consider only affine ind-varieties and  for simplicity we call them just ind-varieties.

A set theoretical product of ind-varieties admits a natural structure of an ind-variety.
 This allows us to introduce the following definition.

\begin{definition}
An ind-variety $G$ is called an \emph{ind-group} if the underlying set $G$ is a group such that the map $G \times G \to G$, defined by $(g,h) \mapsto gh^{-1}$, is a morphism of ind-varieties.
\end{definition}

Note that any closed subgroup $H$ of $G$, i.e. $H$ is a subgroup of $G$ and is a closed subset, is again an ind-group under the  closed ind-subvariety structure on $G$.
 A closed subgroup $H$ of an ind-group $G$ is  an \emph{algebraic subgroup} if $H$ is an algebraic subset of $G$
 i.e. $H$ is a closed subset of some $G_i$, where $G_1 \subset G_2 \subset ...$ is a filtration of $G$.

The next result  can be found in \cite[Section 5]{FK15}.
\begin{proposition}\label{ind-group}
Let X be an affine variety. Then $\Aut(X)$ has the structure of an ind-group such that for any algebraic group $G$, a regular $G$-action on X induces an ind-group homomorphism $G \to \Aut(X)$.
\end{proposition}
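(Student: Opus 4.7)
The plan is to realize $\Aut(X)$ concretely as an increasing union of finite-dimensional algebraic subvarieties indexed by the degrees of polynomial representatives, verify that composition and inversion become morphisms in this structure, and then uniformly bound the degrees of the elements in the image of any regular $G$-action.

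First I would fix a closed embedding $X \hookrightarrow \mathbb{A}^N$ with coordinates $x_1, \dots, x_N$ and vanishing ideal $I(X)$. An endomorphism $\varphi \in \End(X)$ is determined by the tuple $(\varphi^{*}x_1, \dots, \varphi^{*}x_N) \in \mathcal{O}(X)^N$. Fixing once and for all a graded vector-space complement of $I(X)$ in $\mathbb{C}[x_1, \dots, x_N]$ produces canonical polynomial representatives, and I define $\End(X)_d$ to consist of those $\varphi$ whose canonical representative lies in the finite-dimensional space $V_d^N$ of $N$-tuples of polynomials of total degree $\leq d$. The condition that such a tuple defines a morphism $X \to X$, i.e.\ that it carries $I(X)$ into itself, is given by polynomial equations on the coefficients, so $\End(X)_d$ is a closed affine subvariety of $V_d^N$. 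I then set $\Aut(X)_d = \{\varphi \in \End(X)_d \mid \varphi^{-1} \in \End(X)_d\}$, which is closed inside $\End(X)_d$, being the first projection of the closed locus of pairs $(\varphi, \psi) \in \End(X)_d \times \End(X)_d$ satisfying $\varphi\psi = \psi\varphi = \id_X$ (a projection that is in fact a closed embedding, with inverse $\varphi \mapsto (\varphi, \varphi^{-1})$). The chain $\Aut(X)_1 \subset \Aut(X)_2 \subset \cdots$ exhausts $\Aut(X)$ and each inclusion is closed, yielding the ind-variety structure.

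Next I would verify that composition sends $\Aut(X)_d \times \Aut(X)_e$ into $\Aut(X)_{de}$ by a morphism of algebraic varieties: substituting polynomial representatives of degree $\leq e$ into polynomial representatives of degree $\leq d$ produces polynomials of degree $\leq de$ whose coefficients depend polynomially on the inputs, and reduction to canonical normal form preserves polynomiality. Inversion is handled by the very symmetry built into the definition of $\Aut(X)_d$. For the second assertion, a regular action $\mu\colon G \times X \to X$ is encoded by elements $\mu^{*}x_i \in \mathcal{O}(G) \otimes \mathcal{O}(X)$; expressing these in a fixed basis of $\mathcal{O}(X)$ shows that, as $g$ varies over $G$, the polynomial representatives of $g \cdot (-)$ admit a uniform degree bound in the $X$-variables, and the same bound applies to the inverse action since $G \to G$, $g \mapsto g^{-1}$, is a morphism. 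Consequently the set-theoretic map $G \to \Aut(X)$ factors through some $\Aut(X)_{d_0}$ as a morphism of algebraic varieties, which is precisely an ind-group homomorphism.

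The main obstacle I expect is the bookkeeping around canonical representatives: two different polynomial lifts of the same tuple in $\mathcal{O}(X)^N$ must yield the same element of $\Aut(X)_d$, and membership in $\Aut(X)_d$ should ultimately be shown to be independent of the ambient embedding. The cleanest way to deal with this is to pass systematically to normal forms modulo $I(X)$ using the fixed graded complement, after which all remaining verifications reduce to routine polynomial manipulations; independence of the embedding then follows by comparing the two resulting ind-structures through the polynomial change-of-coordinates morphisms between any two choices of ambient affine space.
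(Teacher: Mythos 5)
The paper offers no proof of this proposition; it is quoted directly from \cite[Section~5]{FK15}, and your construction is essentially the one given there: filter $\End(X)\subset\mathcal{O}(X)^N$ by the degree of polynomial representatives, cut out $\Aut(X)$ by requiring the inverse to have bounded degree as well, and obtain the uniform degree bound for a $G$-action from the fact that $\mu^{*}x_i\in\mathcal{O}(G)\otimes\mathcal{O}(X)$ is a finite sum of decomposable tensors. Those parts, including the verification that composition $\End(X)_d\times\End(X)_e\to\End(X)_{de}$ is a morphism, are fine.

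There is, however, one genuine gap. You claim that $\Aut(X)_d=\{\varphi\in\End(X)_d\mid\varphi^{-1}\in\End(X)_d\}$ is \emph{closed} in $\End(X)_d$, because it is the first projection of the closed locus $\Gamma_d=\{(\varphi,\psi)\in\End(X)_d\times\End(X)_d\mid\varphi\psi=\psi\varphi=\id_X\}$, this projection being ``a closed embedding with inverse $\varphi\mapsto(\varphi,\varphi^{-1})$''. This is both circular and false. It is circular because asserting that $\varphi\mapsto(\varphi,\varphi^{-1})$ is a morphism presupposes that inversion is regular on $\Aut(X)_d$, which is part of what is being constructed; and the image of a closed set under a projection need not be closed. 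It is false already for $X=\mathbb{A}^1$: there $\End(\mathbb{A}^1)_1=\{ax+b\}\simeq\mathbb{A}^2$, while $\Aut(\mathbb{A}^1)_1=\{ax+b\mid a\neq0\}$ is open and dense, its closure containing the constant maps. The standard repair (and the one in \cite{FK15}) is to endow $\Aut(X)_d$ with the variety structure of $\Gamma_d$ itself, transported along the bijective projection $\Gamma_d\to\Aut(X)_d$, and to verify closedness where the ind-variety axioms actually demand it, namely of $\Aut(X)_d$ inside $\Aut(X)_{d+1}$: this holds because $\Gamma_d=\Gamma_{d+1}\cap\bigl(\End(X)_d\times\End(X)_d\bigr)$ and $\End(X)_d$ is closed in $\End(X)_{d+1}$. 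With this modification, inversion becomes the swap $(\varphi,\psi)\mapsto(\psi,\varphi)$, hence trivially a morphism, and the rest of your argument, including the morphism $G\to\Gamma_{d_0}$ given by $g\mapsto(\mu_g,\mu_{g^{-1}})$, goes through unchanged.
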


For example, if $X = \mathbb{A}^n$, the ind-group filtration of $\Aut(\mathbb{A}^n)$ is given by 
$$\Aut(\mathbb{A}^n)_d = \{ f = (f_1,\dots,f_n) \in \Aut(\mathbb{A}^n) \mid \deg f = \max_i f_i \le d, \; \deg f^{-1} \le d  \}.$$

The following observation will turn out to be useful in the proof of Theorem \ref{main2}:
\begin{lemma}[Lemma 2.4, \cite{LRU18}]\label{centralizerclosed}
Let X be an affine variety and let $U \subset \Aut(X)$ be a commutative subgroup that coincides with its centraliser. Then $U$ is
a closed subgroup of $\Aut(X)$.
\end{lemma}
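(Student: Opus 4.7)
The plan is to express $U$ as an intersection of centralizers of single elements of $\Aut(X)$ and to show that each such centralizer is closed. The hypothesis that $U$ is commutative and coincides with its own centralizer in $\Aut(X)$ gives the set-theoretic equality
$$U \;=\; C_{\Aut(X)}(U) \;=\; \bigcap_{u \in U} C_{\Aut(X)}(u),$$
where $C_{\Aut(X)}(u) = \{g \in \Aut(X) : gu = ug\}$. Since arbitrary intersections of closed subsets of an ind-variety are again closed (their intersection with each $V_k$ of the filtration is an intersection of Zariski-closed subsets of the algebraic variety $V_k$, hence closed in $V_k$), it suffices to prove that each $C_{\Aut(X)}(u)$ is closed in $\Aut(X)$.

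For this, I would fix $u \in U$ and consider the map
$$\psi_u \colon \Aut(X) \to \Aut(X), \qquad g \mapsto g u g^{-1} u^{-1}.$$
By Proposition \ref{ind-group} the automorphism group $\Aut(X)$ is an ind-group, so multiplication and inversion on $\Aut(X)$ are morphisms of ind-varieties; together with the constant morphism $g \mapsto u$ (and $g \mapsto u^{-1}$), this shows that $\psi_u$ is a morphism of ind-varieties. The singleton $\{\mathrm{id}_X\}$ is closed in $\Aut(X)$ because points are Zariski-closed in each algebraic piece $\Aut(X)_k$ of the filtration. Therefore $C_{\Aut(X)}(u) = \psi_u^{-1}(\mathrm{id}_X)$ is a closed subset of $\Aut(X)$, and assembling these facts yields that $U$ itself is closed.

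I do not anticipate a serious obstacle: the argument is entirely formal, relying only on the ind-group structure of $\Aut(X)$ supplied by Proposition \ref{ind-group} and on the definition of closedness in an ind-variety. The only point requiring a moment of care is that the index set $U$ may be uncountable, but this is harmless because closedness in an ind-variety is checked separately on each finite-dimensional piece $V_k$, where arbitrary intersections of Zariski-closed subsets are still closed.
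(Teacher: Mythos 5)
Your argument is correct and is essentially the same as the one in the cited source \cite{LRU18} (the paper itself only quotes the lemma without reproducing a proof): write $U=C_{\Aut(X)}(U)=\bigcap_{u\in U}C_{\Aut(X)}(u)$ and observe that each single-element centralizer is the preimage of the closed point $\{\id_X\}$ under the commutator morphism, with arbitrary intersections of closed sets remaining closed in the ind-topology. No gaps.
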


 The next statement follows from \cite[Theorem B]{CRX19}  (see also \cite[Corollary 3.2]{RvS21}). We will need it in the proof of Theorem \ref{main2}.

\begin{proposition}\label{CRX}
Let $X$ be an affine variety and let $G$ be a commutative connected
closed subgroup of $\Aut(X)$. Then $G$ is the countable union of an increasing filtration
by commutative connected algebraic subgroups of $\Aut(X)$.
\end{proposition}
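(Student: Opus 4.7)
The plan is to transfer the problem to the ind-variety structure that $G$ inherits from $\Aut(X)$ and then to invoke the structural result on commutative ind-groups recorded as \cite[Theorem B]{CRX19}.

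First I would observe, using Proposition~\ref{ind-group}, that $\Aut(X) = \bigcup_k \Aut(X)_k$ is an ind-group. Since $G$ is assumed closed, intersecting gives a filtration $G = \bigcup_k G_k$ with each $G_k = G \cap \Aut(X)_k$ a closed algebraic subset of $\Aut(X)_k$. Because $G$ is connected as an ind-variety, after replacing the $G_k$ by their irreducible components passing through $e$ (enlarging if necessary to guarantee $G = \bigcup_k G_k$), I may assume each $G_k$ is an irreducible closed algebraic subset of $G$ containing the identity $e$.

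Next, for each $k$ I would set $H_k := \langle G_k \cup G_k^{-1}\rangle \subset G$. Since $G$ is commutative,
\[
H_k \;=\; \bigcup_{m\ge 1}\, \overline{\mu_m\bigl((G_k \cup G_k^{-1})^m\bigr)},
\]
where $\mu_m\colon G^m\to G$ denotes the iterated multiplication morphism. Each term $\overline{\mu_m((G_k\cup G_k^{-1})^m)}$ is the closure of a constructible image of an irreducible algebraic variety, hence an irreducible closed algebraic subset of some $\Aut(X)_{n(m)}$. The crucial claim, supplied by \cite[Theorem B]{CRX19} (and equivalently by \cite[Corollary 3.2]{RvS21}), is that this increasing chain of irreducible closed subsets of $G$ stabilizes at some finite $m$. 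The stabilized term is then a closed algebraic subset of $\Aut(X)$; commutativity forces it to be a subgroup (the closure of a subgroup in a commutative Hausdorff topological group is a subgroup), and irreducibility forces it to be connected. Thus $H_k$ is a connected commutative algebraic subgroup of $\Aut(X)$ contained in $G$.

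Finally, to guarantee the filtration is increasing, I would replace $H_k$ by $\widetilde{H}_k := \langle H_1\cup\cdots\cup H_k\rangle$, which by the very same argument applied to the finite union of irreducible pieces $H_1,\dots,H_k$ is again a connected commutative algebraic subgroup of $\Aut(X)$. Then $\widetilde{H}_1\subset \widetilde{H}_2 \subset\cdots$, and since $G_k\subset \widetilde{H}_k$, we obtain $G = \bigcup_k \widetilde{H}_k$ as required. The main obstacle will be the stabilization step, namely controlling the dimensions of the iterated products $\overline{\mu_m((G_k\cup G_k^{-1})^m)}$ inside the infinite-dimensional ambient ind-group $\Aut(X)$: the ambient filtration dimensions $\dim \Aut(X)_{n(m)}$ grow unboundedly with $m$, so the needed bound cannot come from the ambient filtration but must be extracted from an intrinsic property of the commutative closed subgroup $G$. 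This is precisely the Noetherianity-type input that \cite[Theorem B]{CRX19} provides, typically via a tangent-space/Lie-algebra argument forcing any ascending chain of irreducible algebraic subsets of a commutative closed subgroup of an ind-group to terminate.
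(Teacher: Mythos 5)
Your proposal is correct and follows essentially the same route as the paper, which offers no proof of its own but simply states that the proposition ``follows from \cite[Theorem B]{CRX19} (see also \cite[Corollary 3.2]{RvS21})'' --- exactly the key input you isolate for the stabilization step. Your reduction (filtering $G$ by irreducible algebraic subsets through the identity and applying the commuting-families theorem to the group each one generates) is the intended way to extract the statement from that reference.
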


\subsection{Algebraic and divisible elements}
We call an element $f$ in a group $G$  \emph{divisible by $n$} if there exists an element $g \in G$ such
that $g^n = f$. An element is called \emph{divisible} if it is divisible by all $n \in \mathbb{Z}^+$.  An element $f$ in the automorphism group of an affine variety $X$ is \emph{algebraic} if it is contained in an algebraic subgroup $G$ of $\Aut(X)$
with respect to its ind-group structure.

In the proof of Theorem \ref{main2} we will need the following result that
follows from \cite[Theorem 3.1]{LRU18} and   \cite[Corollary 2.6]{LRU18}
and that
connects the notions of divisibility and algebraicity in the automorphism group of an affine surface.

\begin{proposition}\label{divisiblealgebraic}
Let $S$ be an affine irreducible algebraic surface.  Then the following two conditions are equivalent:
	\begin{enumerate}
			\item there exists a $k>0$ such that $f^k$ is divisible;
			\item $f$ is algebraic.
		\end{enumerate}
\end{proposition}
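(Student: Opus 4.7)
The plan is to combine the two cited results from \cite{LRU18} with some general facts about connected commutative algebraic groups. The proposition is an equivalence, so I would treat the two directions separately, expecting $(2)\Rightarrow(1)$ to be essentially formal and $(1)\Rightarrow(2)$ to be the substantial direction.

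For $(2)\Rightarrow(1)$, assume $f$ lies in an algebraic subgroup $G\subset\Aut(S)$. I would pass to the Zariski closure $H=\overline{\langle f\rangle}$ inside $G$, which is a commutative algebraic subgroup, and then to its identity component $H^\circ$. Since $H^\circ$ has finite index in $H$, there is some $k>0$ with $f^k\in H^\circ$. A connected commutative algebraic group over $\mathbb{C}$ is an extension of an algebraic torus by a unipotent (hence vector) group; both factors are divisible as abstract groups, and extensions of divisible groups by divisible groups are divisible. Hence $f^k$ is divisible in $H^\circ$, and therefore in $\Aut(S)$.

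For $(1)\Rightarrow(2)$, the plan is to apply [LRU18, Theorem 3.1] directly to $f^k$: assuming that cited theorem states that every divisible element of $\Aut(S)$ for an affine surface $S$ is algebraic, we immediately conclude that $f^k$ is algebraic. It then remains to lift algebraicity from $f^k$ to $f$, which I would do by invoking [LRU18, Corollary 2.6]; I expect this corollary says precisely that algebraicity of an element of $\Aut(S)$ is equivalent to algebraicity of any (some) positive power of it. Concretely, if $f^k$ lies in an algebraic subgroup $G$, then the preimage of $G$ under the morphism $g\mapsto g^k$ inside the closure of $\langle f\rangle$ should again be algebraic, yielding an algebraic subgroup containing $f$.

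The main obstacle is clearly the implication ``$f^k$ divisible $\Rightarrow$ $f^k$ algebraic,'' which is where the surface hypothesis enters and which is packaged into [LRU18, Theorem 3.1]. Conceptually, this is nontrivial because $\Aut(S)$ is an infinite-dimensional ind-group in which abstract divisibility is a priori much weaker than algebraicity; the reason it works for surfaces is that the structure of $\Aut(S)$ is constrained enough (amalgamated product type decompositions, classification of one-parameter subgroups) that an infinite tower of $n$th roots of $f^k$ forces $f^k$ to be conjugate into a one-dimensional torus or a $\mathbb{G}_a$-subgroup. The rest of the argument, including the $(2)\Rightarrow(1)$ direction and the passage from a power to the element itself, is standard algebraic-group bookkeeping.
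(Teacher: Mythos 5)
Your proposal matches the paper's treatment: the paper gives no independent argument and simply states that the proposition follows from [LRU18, Theorem 3.1] combined with [LRU18, Corollary 2.6], which is exactly the decomposition you use for the substantive direction $(1)\Rightarrow(2)$, while your observation that connected commutative algebraic groups over $\mathbb{C}$ are divisible correctly handles $(2)\Rightarrow(1)$. One minor caveat: your parenthetical justification of Corollary 2.6 via the preimage of $G$ under $g\mapsto g^k$ would not work as written (preimages of algebraic subsets under ind-group morphisms need not be algebraic, and $\overline{\langle f\rangle}$ is not a priori algebraic), but since you ultimately defer to the cited corollary, exactly as the paper does, this does not affect the argument.
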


\subsection{Lie algebras of vector fields}\label{Liealgebras_vf}
We denote by $\Ve(X)$  the Lie algebra of  vector fields on an affine variety $X$.
A vector field $\nu \in \Ve(X)$ is called \emph{locally nilpotent} if for any $f \in \O(X)$ there exists $s \in \mathbb{N}$ such that $\nu^s(f) = 0$. 
By $\lielnd{X}$ we denote the Lie subalgebra of $\Ve(X)$ generated by all
locally nilpotent vector fields.
We have the following lemma.

\begin{lemma}[Lemma 1, \cite{LR20}]\label{iso-lnd}\label{vectorfields}
Let $X$ and $Y$ be affine algebraic varieties and let $\varphi\colon \Aut(X) \to \Aut(Y)$ be a homomorphism of ind-groups. Then $\varphi$ induces 
the homomorphisms of Lie algebras \[ d\varphi  \colon \lielnd{X} \to \lielnd{Y}. \] 
Moreover, if $\varphi$ is an isomorphism, then the homomorphism $d\varphi$  is also the isomorphism.
\end{lemma}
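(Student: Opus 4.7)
The plan is to exploit the standard correspondence between locally nilpotent vector fields on $X$ and regular $\mathbb{G}_a$-actions on $X$. Given such an LND $\nu \in \Ve(X)$, the one-parameter group $\alpha_\nu(t) = \exp(t\nu)$ is by Proposition~\ref{ind-group} an ind-group homomorphism $\mathbb{G}_a \to \Aut(X)$. Composing with $\varphi$ yields an ind-group homomorphism $\varphi \circ \alpha_\nu\colon \mathbb{G}_a \to \Aut(Y)$, and this in turn corresponds to a regular $\mathbb{G}_a$-action on $Y$, hence to an LND on $Y$, which I define to be $d\varphi(\nu)$. The assignment is compatible with scalar multiplication since rescaling $\nu$ by $c\in\mathbb{C}$ corresponds to the reparametrization $t \mapsto ct$ of $\alpha_\nu$.

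To upgrade this set-theoretic assignment on LNDs to a Lie algebra homomorphism on $\lielnd{X}$, I would view all LNDs as tangent vectors at the identity of the ind-group $\Aut(X)$, so that they lie inside the Lie algebra $\Lie(\Aut(X))$. The homomorphism $\varphi$ then carries a canonical differential $d\varphi\colon \Lie(\Aut(X)) \to \Lie(\Aut(Y))$ that is itself a Lie algebra homomorphism, and its restriction to LNDs coincides with the map constructed in the previous paragraph. Since $\lielnd{X}$ is by definition the Lie subalgebra generated by LNDs and the bracket is preserved by $d\varphi$, the image is contained in $\lielnd{Y}$, producing the desired map.

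The main obstacle is verifying that $d\varphi$ actually respects the Lie bracket, since the bracket $[\nu,\mu]$ of two LNDs need not itself be locally nilpotent and therefore does not correspond directly to a $\mathbb{G}_a$-action. This is where one uses that the Lie bracket on $\Lie(\Aut(X))$ can be recovered from the group commutator $\exp(t\nu)\exp(s\mu)\exp(-t\nu)\exp(-s\mu)$ via its second-order expansion at the identity. Because the ind-group homomorphism $\varphi$ is in particular a group homomorphism and is continuous in the ind-variety sense, it preserves such commutators; taking the appropriate directional derivative at the origin then yields $d\varphi([\nu,\mu]) = [d\varphi(\nu), d\varphi(\mu)]$, so the defining relations of the Lie subalgebra generated by LNDs are respected.

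Finally, when $\varphi$ is an isomorphism of ind-groups, I would apply the same construction to $\varphi^{-1}$ to obtain $d(\varphi^{-1})\colon \lielnd{Y} \to \lielnd{X}$. By functoriality of the differential, applied to $\varphi \circ \varphi^{-1} = \id_{\Aut(Y)}$ and $\varphi^{-1} \circ \varphi = \id_{\Aut(X)}$, the compositions $d\varphi \circ d(\varphi^{-1})$ and $d(\varphi^{-1}) \circ d\varphi$ are the identity maps. Hence $d\varphi$ is an isomorphism of Lie algebras, completing the proof.
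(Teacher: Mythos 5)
Your argument is correct in outline, but note that the paper itself gives no proof of this lemma: it is imported verbatim as Lemma~1 of \cite{LR20}, so there is nothing in the text to compare against line by line. What you write is essentially the standard argument underlying that reference: locally nilpotent vector fields correspond to $\mathbb{G}_a$-actions, hence to ind-group homomorphisms $\mathbb{G}_a \to \Aut(X)$, which $\varphi$ pushes forward; the passage to the generated Lie subalgebra is handled by the canonical differential of an ind-group homomorphism. The only points where you are implicitly leaning on nontrivial input from \cite{FK15} are (i) the \emph{converse} of Proposition~\ref{ind-group}, namely that every ind-group homomorphism $\mathbb{G}_a \to \Aut(Y)$ arises from a regular action (the paper states only one direction), and (ii) the fact that $\Lie(\Aut(X))$ carries a bracket compatible with the embedding into $\Ve(X)$ and that ind-group homomorphisms induce Lie algebra homomorphisms; your commutator-expansion heuristic is the right idea but should be replaced by a citation to \cite{FK15} rather than re-derived, since for ind-groups this requires some care.
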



\subsection{Root subgroups}\label{rootsubgroups}
	In this section we describe  {\it root subgroups} of the automorphism group
	$\Aut(X)$ of an affine variety $X$ with respect to a subtorus.
	
	\begin{definition}
		Let $T \subset \Aut(X)$ be a subtorus in $\Aut(X)$, i.e. a closed
		algebraic subgroup isomorphic to a torus.  A closed algebraic subgroup
		$U \subset \Aut(X)$ isomorphic to $\mathbb{G}_a$ is called a {\it root
			subgroup} with respect to $T$ if the normalizer of $U$ in
		$\Aut(X)$ contains $T$.
		 Such an algebraic
		subgroup $U$ corresponds to a non-trivial  $\mathbb{G}_a$-action on $X$,
	  whose image in $\Aut(X)$ is normalized
		by $T$.
	\end{definition}
	
	Assume $U\subset \Aut(X)$ is a root subgroup with respect to $T$. Since
	$T$ normalizes $U$, we can define an action
	$\varphi\colon T\rightarrow \Aut(U)$ of $T$ on $U$ given by
	$t.u=t\circ u\circ t^{-1}$ for all $t\in T$ and $u\in U$.
	Moreover, since $\Aut(U)\simeq \mathbb{G}_m$, such an action corresponds to a
	character of the torus $\mu\colon T\rightarrow \mathbb{G}_m$, which does not
	depend on the choice of an isomorphism between $\mathbb{G}_m$  and
	$\Aut(U)$. Such a character is
	called the \emph{weight} of $U$. 
	The subgroup of $\Aut(X)$  generated by 
    algebraic subgroups $T$ and $U$
    is isomorphic to $\mathbb{G}_a\rtimes_\mu T$.

		Assume that the algebraic torus $T$ acts linearly and rationally  on a
	vector space $A$ of countable dimension. We say that $A$ is {\it
		multiplicity-free} if the weight spaces $A_{\mu}$ are all of
	dimension less or equal than one for every character
	$\mu\colon T\rightarrow \mathbb{G}_m$ of the torus $T$. In the proof of
	Theorem \ref{main2} we will use the following lemma that is due to
	Kraft:
	
	\begin{lemma}[Lemma~5.2, \cite{Kr17}]\label{Kr15local}
		Let $X$ be a normal affine variety and let $T \subset \Aut(X)$ be
		a torus. If there exists a root subgroup $U \subset \Aut(X)$ with
		respect to $T$ such that $\mathcal{O}(X)^{U}$ is multiplicity-free,
		then $\dim T \le \dim X \le \dim T +1$.
	\end{lemma}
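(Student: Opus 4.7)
The plan is to prove the two inequalities separately. The lower bound $\dim T \le \dim X$ is standard: since $T$ is a closed subgroup of $\Aut(X)$, it acts faithfully on $X$, and on an irreducible variety a faithful torus action has generic stabilizer of dimension zero (any positive-dimensional subtorus fixing a dense open subset would fix all of $X$, as the fixed-point locus is closed, contradicting faithfulness). Thus generic $T$-orbits have dimension $\dim T$, giving $\dim T \le \dim X$.

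For the upper bound $\dim X \le \dim T + 1$, let $\partial$ be the nonzero locally nilpotent derivation on $\mathcal{O}(X)$ generating $U$ and set $A := \mathcal{O}(X)^U = \ker\partial$. A nontrivial $\mathbb{G}_a$-action on an affine irreducible variety has generic orbit of dimension one, so the transcendence degree of $A$ over $\mathbb{C}$ equals $\dim X - 1$. Since $T$ normalizes $U$, the $T$-action on $\mathcal{O}(X)$ preserves $A$; rationality of the $T$-action then yields a weight decomposition $A = \bigoplus_\mu A_\mu$ indexed by characters $\mu$ of $T$, with $\dim_\mathbb{C} A_\mu \le 1$ by the multiplicity-free hypothesis. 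Let $S = \{\mu : A_\mu \neq 0\}$. Since $X$ is irreducible, $A$ is a domain, so the product of nonzero weight vectors is nonzero; thus $S$ is a sub-semigroup of the character lattice of $T$, which has rank $\dim T$.

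The remaining step is the claim that the transcendence degree of $A$ over $\mathbb{C}$ equals the rank $r$ of the subgroup $\langle S\rangle$ generated by $S$ in the character lattice. Pick $\mu_1, \dots, \mu_r \in S$ forming a $\mathbb{Q}$-basis of $\langle S\rangle \otimes_\mathbb{Z} \mathbb{Q}$ and nonzero $e_i \in A_{\mu_i}$. The $e_i$ are algebraically independent over $\mathbb{C}$ because any polynomial relation splits into weight-homogeneous components, and the $\mathbb{Q}$-linear independence of the $\mu_i$ makes the weights of distinct monomials distinct, forcing all coefficients to vanish. Conversely, for any $\mu \in S$ choose $n > 0$ and integers $a_i$ with $n\mu = \sum a_i \mu_i$; multiplicity-freeness forces the two elements $e_\mu^n \prod_{a_i < 0} e_i^{-a_i}$ and $\prod_{a_i > 0} e_i^{a_i}$, which lie in the same one-dimensional weight space, to be proportional, yielding an algebraic relation for $e_\mu$ over $\mathbb{C}(e_1, \dots, e_r)$. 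Combining, $\dim X - 1 = r \le \dim T$. I expect this transcendence-degree computation to be the main technical point, since it relies essentially on both the domain property (for the semigroup structure on $S$) and on multiplicity-freeness (for the proportionality within each weight space).
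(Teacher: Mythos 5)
The paper does not prove this lemma; it is quoted verbatim from \cite{Kr17} (Lemma~5.2), so there is no in-paper proof to compare against. Your argument is correct and is essentially the standard one behind Kraft's lemma: faithfulness of the torus action gives $\dim T\le\dim X$, the kernel of the locally nilpotent derivation has transcendence degree $\dim X-1$, and the multiplicity-free weight decomposition of the domain $\mathcal{O}(X)^{U}$ pins its transcendence degree to the rank of the weight group, which is at most $\dim T$. The only step stated a bit tersely is that a faithful torus action on an irreducible variety has trivial (or at least finite) generic stabilizer; this needs either the countability of the family of subtori or, more cleanly, the observation that faithfulness forces the weights occurring in $\mathcal{O}(X)$ to span the character lattice rationally, whence $\dim T$ algebraically independent semi-invariants exist — but this is standard and not a gap.
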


	Let $X$ be an affine variety and consider a nontrivial algebraic action of $\mathbb{G}_a$ on $X$, given by $\lambda \colon \mathbb{G}_a \to \Aut(X)$. If $f \in \mathcal{O}(X)$ is a $\mathbb{G}_a$-invariant regular function, then the \emph{modification} $f \cdot \lambda$ of $\lambda$ is defined in the    following way (see \cite[Section 8.3]{FK15}):
$$(f \cdot \lambda)(s)x =\lambda(f(x)s)x$$ 
for $s \in \mathbb{C}$ and $x \in X$.
This is again a $\mathbb{G}_a$-action. 
 If $X$ is irreducible and $f  \neq 0$, then $f \cdot \lambda$  and $\lambda$ have the same invariants.
If $U \subset \Aut(X)$ is a closed algebraic subgroup isomorphic to $\mathbb{G}_a$ and if $f \in \mathcal{O}(X)^U$ is a $U$-invariant, then  similarly as above  we define  the modification $f \cdot U$ of $U$. Pick an isomorphism
$\lambda \colon \mathbb{G}_a \to U$ and set 
$$f \cdot U = \{ (f \cdot \lambda)(s) \mid s \in \mathbb{G}_a \}.$$ 
 
If $U \subset \Aut(X)$ is a root subgroup with respect to $T$ and $f \in \mathcal{O}(X)^U$ is a $T$-semi-invariant, then $f \cdot U$ is again a root subgroup with respect to $T$.

\subsection{Hyperbolic surfaces}

We will use the next two lemmas proved in \cite{LRU18} in the proof of Theorem \ref{main2}.

\begin{lemma}[Lemma 4.16, \cite{LRU18}]\label{hyperbolic} A non-toric $\mathbb{G}_m$-surface $S$ admits root subgroups of different weights if and only if $S$ is hyperbolic. Furthermore, in this case all
root subgroups have different weights.
\end{lemma}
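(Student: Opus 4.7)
The plan is to proceed via the Flenner--Zaidenberg (equivalently Altmann--Hausen) DPD presentation of normal affine $\mathbb{G}_m$-surfaces, splitting into cases according to the dynamical type of $S$ and invoking, in each case, the classification of homogeneous locally nilpotent derivations on the graded coordinate ring (equivalently, of root subgroups with respect to the acting torus $T$).

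For the ``only if'' direction I would handle the elliptic and parabolic non-toric cases separately. In the elliptic case, the same argument as in Remark \ref{remarkellipticandparabolic} applies: by \cite[Theorem 3.3]{FZ05b}, the existence of any root subgroup with respect to $T$ would force $S$ to be toric, so a non-toric elliptic surface admits no root subgroup at all, and the statement is vacuous. In the parabolic case, the DPD presentation takes the form $\O(S)=\bigoplus_{m\ge 0}H^0(C,\O(\lfloor mD\rfloor))\chi^m$ for a smooth curve $C$ and a $\mathbb{Q}$-divisor $D$; every homogeneous locally nilpotent derivation on this $\Z_{\ge 0}$-graded ring shifts the grading by a fixed amount determined by the vertical fibration, so all root subgroups with respect to $T$ share a single weight.

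For the ``if'' direction, in the hyperbolic non-toric case $\O(S)$ admits a two-sided presentation $\O(S)=\bigoplus_{m\in\Z}A_m$ with $A_m=H^0(C,\O(\lfloor mD_+\rfloor))$ for $m\ge 0$ and $A_m=H^0(C,\O(\lfloor -mD_-\rfloor))$ for $m\le 0$. The classification of homogeneous locally nilpotent derivations shows they split into two disjoint families, one raising the $\Z$-grading (associated to $D_+$) and one lowering it (associated to $D_-$). Within each family one can single out a primitive root subgroup of minimal positive, respectively negative, $T$-weight; these two weights have opposite signs, so $S$ admits root subgroups of different weights.

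For the ``furthermore'' clause, I would show that within each family every root subgroup arises as a modification $f\cdot U_0$ of the primitive root subgroup $U_0$ by a $T$-semi-invariant $f\in\O(S)^{U_0}$, and that such a modification shifts the $T$-weight by the $T$-weight of $f$. Since distinct $T$-semi-invariants in $\O(S)^{U_0}$ produce distinct weight shifts, the root subgroups within each family have pairwise distinct weights; combined with the opposite-sign observation between the two families, this yields that all root subgroups have pairwise distinct weights. The main technical obstacle I expect is the weight bookkeeping in the hyperbolic case: one must verify both that every homogeneous locally nilpotent derivation on the bigraded DPD ring is a modification of a primitive one inside its family, and that the modification construction induces an injection from $T$-weights of modifying functions to $T$-weights of the resulting root subgroups.
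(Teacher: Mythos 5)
This lemma is not proved in the paper at all: it is imported verbatim as \cite[Lemma 4.16]{LRU18}, so there is no in-paper argument to measure your proposal against. Your route --- the Flenner--Zaidenberg DPD presentation together with their classification of homogeneous locally nilpotent derivations in the elliptic, parabolic and hyperbolic cases --- is the standard one and is essentially how the cited source argues, so the architecture is right. Two points, however, need repair.

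First, the ``if'' direction as you wrote it deduces ``root subgroups of different weights'' from having one root subgroup in the $D_+$-family and one in the $D_-$-family. Hyperbolicity guarantees neither: the Flenner--Zaidenberg existence criteria for homogeneous locally nilpotent derivations of positive and of negative degree are independent conditions on $D_+$ and $D_-$, so one of your two families may be empty; worse, a hyperbolic surface whose quotient curve is non-rational admits no root subgroup at all, so the lemma is only meaningful under the standing hypothesis (in force wherever the lemma is used in this paper) that $S$ admits a $\mathbb{G}_a$-action. The correct argument is the one you already give for the ``furthermore'' clause: once a single root subgroup $U_0$ exists, it is of horizontal type (there are no fiber-type homogeneous locally nilpotent derivations in the hyperbolic case, since general fibers of $S\to C$ are closed $\mathbb{G}_m$-orbits), hence $\mathcal{O}(S)^{U_0}=\mathbb{C}[u]$ with $u$ homogeneous of non-zero weight, and the modifications $u^k\cdot U_0$ already realize infinitely many pairwise distinct weights. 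Second, your ``furthermore'' step silently uses two non-trivial inputs from \cite{FZ05b}: that any two homogeneous locally nilpotent derivations of positive (respectively negative) degree on a hyperbolic surface have the \emph{same} kernel --- without this, ``every root subgroup in the family is a modification of the primitive one'' is unjustified --- and that this common kernel is multiplicity-free as a $T$-module, without which two independent modifying functions of equal weight would produce distinct root subgroups of equal weight. Both facts hold, but they are the substance of the claim and must be cited or proved; the same goes for the single-weight assertion in the parabolic case, which you state but do not derive.
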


\begin{lemma}[Lemma 4.15, \cite{LRU18}]\label{LRULemma4.15}
Let $S$ be a non-toric normal affine $\mathbb{G}_m$-surface and denote by 
$T \subset \Aut(S)$ the subgroup
isomorphic to $\mathbb{G}_m$ induced by the $\mathbb{G}_m$-action. Let 
$H \subset \Aut(S)$ be an abelian subgroup containing only
algebraic elements such that $T \subset H$. Then there exists a finite group $F$ such that $H \simeq T \times F$.
\end{lemma}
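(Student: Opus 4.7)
The plan is to show first that every element of $H$ has finite order modulo $T$, then that $H/T$ is in fact finite, and finally to split the extension $1 \to T \to H \to H/T \to 1$ using divisibility of $T$.

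For the first step, fix $h \in H$ and exploit algebraicity: the Zariski closure $\overline{\langle h \rangle}$ in $\Aut(S)$ is a finite-dimensional algebraic subgroup. Since $h$ commutes with $T$ (because $H$ is abelian and contains $T$), its closure $\overline{\langle h \rangle}$ also centralizes $T$ by continuity, so $G_h := T \cdot \overline{\langle h \rangle}$ is an abelian algebraic subgroup of $\Aut(S)$ containing $T$. Writing the identity component $G_h^0$ as a product of a torus $T_h \supseteq T$ with a unipotent group $U_h$, the non-toric hypothesis on $S$ forces $T_h = T$; and a nontrivial $U_h$ would yield a $\mathbb{G}_a$-action commuting with the $\mathbb{G}_m$-action, so that $\mathbb{G}_m \times \mathbb{G}_a$ would act effectively on $S$. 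A routine analysis of such an action on a normal affine surface shows $S \simeq \mathbb{A}^2$, contradicting the non-toric hypothesis. Hence $G_h^0 = T$, $h$ has finite order in $G_h / T$, and $H/T$ is a torsion abelian group.

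For the second step, consider the categorical quotient $\pi \colon S \to C := S /\!/ T$, where $C$ is a normal affine curve. Each element of $H$ descends to an automorphism of $C$, giving $\rho \colon H \to \Aut(C)$, and the kernel of $\rho$ consists of $T$-equivariant automorphisms of $S$ acting trivially on $C$. Using the Dolgachev--Pinkham--Demazure (DPD) presentation of non-toric $\mathbb{G}_m$-surfaces, this kernel coincides with $T$. The image $\rho(H)$ must preserve the finite divisorial data on $C$ attached to the DPD presentation, so $\rho(H)$ is contained in the stabilizer of a finite subset of $C$, which is itself a finite subgroup of $\Aut(C)$; consequently $H/T \simeq \rho(H)$ is finite.

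Finally, since $T \simeq \mathbb{G}_m$ is a divisible, hence injective, abelian group, the short exact sequence $1 \to T \to H \to H/T \to 1$ of abelian groups splits, producing $H \simeq T \times F$ with $F := H/T$ finite. The main obstacle is the structural input in the first two steps: for a non-toric normal affine $\mathbb{G}_m$-surface, $\Aut(S)$ contains neither a $\mathbb{G}_a$-subgroup commuting with $T$ nor a torus properly containing $T$, and every automorphism commuting with $T$ which acts trivially on $S /\!/ T$ already lies in $T$. These rigidity facts come from the classification of non-toric $\mathbb{G}_m$-surfaces via their DPD data; granted them, the remainder of the argument is elementary group theory combined with the injectivity of divisible abelian groups.
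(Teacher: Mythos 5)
The paper itself does not prove this statement---it imports it verbatim as \cite[Lemma 4.15]{LRU18}---so there is no internal proof to compare against; I therefore assess your argument on its own terms. Your first step (every $h\in H$ has finite order modulo $T$) is sound: $\overline{\langle h\rangle}$ is a commutative algebraic subgroup centralizing $T$, and the identity component of $T\cdot\overline{\langle h\rangle}$ must equal $T$ because a two-dimensional commutative non-unipotent algebraic subgroup of $\Aut(S)$ would force $S$ to be toric (this is exactly \cite[Lemma 4.17]{LRU18}, which the present paper invokes repeatedly). Your claim that an effective $\mathbb{G}_m\times\mathbb{G}_a$-action forces $S\simeq\mathbb{A}^2$ is slightly off---$(\mathbb{A}^1\setminus\{0\})\times\mathbb{A}^1$ also occurs---but the conclusion ``toric'' is what is needed. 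Your last step is also fine: $T\simeq\mathbb{C}^*$ is divisible, hence an injective $\mathbb{Z}$-module, so it splits off the abelian group $H$ as a direct factor.

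The genuine gap is the second step, the finiteness of $H/T$, which is the actual content of the lemma. Two concrete problems. First, the reduction ``$\rho(H)$ stabilizes a finite subset of $C$, hence is finite'' is false as stated: the stabilizer of a finite subset of an affine curve need not be finite (the stabilizer of a single point of $\mathbb{A}^1$ contains $\mathbb{G}_m$), and knowing in addition that $\rho(H)\simeq H/T$ is torsion abelian does not help (the group of all roots of unity inside $\mathbb{G}_m\subset\Aut(\mathbb{A}^1)$ is an infinite torsion abelian group). Excluding these configurations is precisely where the non-toric hypothesis must enter through the DPD data---one has to show that whenever the divisorial data is concentrated enough for its stabilizer to be infinite, $S$ is already toric---and you only gesture at this (``granted these rigidity facts''). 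Similarly, the identification $\ker(\rho|_H)=T$ requires an argument when $\mathcal{O}(C)^*\neq\mathbb{C}^*$, since an automorphism trivial on $C$ may act on the graded pieces by non-constant units; it can be rescued by combining it with your first step, but not for free. Second, the whole construction via $C=S/\!\!/T$ collapses in the elliptic case, where $\mathcal{O}(S)^T=\mathbb{C}$ and the quotient is a point; the lemma covers elliptic and parabolic non-toric surfaces as well, so you would need to replace $S/\!\!/T$ by the projective curve of the DPD presentation there. As written, the heart of the proof is missing.
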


\section{Proof of Theorem \ref{main2}}

Assume $\varphi\colon \Aut(S) \to \Aut(X)$ is an isomorphism of groups. Denote by $T$  a one-dimensional subtorus of $\Aut(S)$, i.e. $T \subset \Aut(S)$ is an algebraic subgroup isomorphic to $\mathbb{G}_m$.  Further,
denote by $Z$ a maximal abelian subgroup of 
the centralizer of $T$ in $\Aut(S)$ that contains $T$.
The group $Z$ coincides with its centralizer in $\Aut(S)$
which in turn implies that $Z \subset \Aut(S)$ is closed (see Lemma \ref{centralizerclosed}).
We claim that
$Z$ is a countable extension of $T$. Indeed, otherwise if 
$Z$  is an uncountable extension of $T$, then the connected component $Z^\circ$ is again an uncountable extension of $T$ (as $Z^\circ \subset Z$ is a countable index subgroup) and by
 Proposition \ref{CRX} 
$Z$ contains a two-dimensional commutative non-unipotent algebraic subgroup  which is not possible as $S$ is non-toric (see \cite[Lemma 4.17]{LRU18}).
Further, since $\varphi(Z)$ coincides with its centralizer in $\Aut(X)$, 
$\varphi(Z) \subset \Aut(X)$ is closed and $\varphi(Z)^\circ$ is a union of commutative algebraic groups (see Proposition \ref{CRX}). 

By assumption $S$ admits  a
regular $\mathbb{G}_a$-action. Hence,
$\Aut(S)$ contains a root subgroup with respect to $T$ (see for example
\cite[Lemma 1.10]{Lie10}).
 Choose a root subgroup  $U \subset \Aut(S)$ with respect to $T$. Note that the weight of the root subgroup $U$ is non-zero since $S$ is non-toric (see  \cite[Lemma 4.17]{LRU18}).
\begin{claim}\label{closedrootsubgroup}
 The subgroup $\varphi(U) \subset \Aut(X)$ is closed.
 \end{claim}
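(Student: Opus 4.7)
The plan is to consider the closure $V := \overline{\varphi(U)} \subseteq \Aut(X)$ and to show $V = \varphi(U)$, which immediately yields the claim.

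First, since $\varphi(U)$ is abelian and the multiplication and inversion maps on the ind-group $\Aut(X)$ are continuous, $V$ is a closed abelian subgroup of $\Aut(X)$. Moreover, because $T$ normalizes $U$ and conjugation by any fixed element of $\Aut(X)$ is a morphism of ind-varieties, the subgroup $\varphi(T)$ normalizes $V$ as well.

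Next, I would apply Proposition~\ref{CRX} to the connected component $V^\circ$, which is a closed connected abelian subgroup and therefore decomposes as a countable increasing union $V^\circ = \bigcup_n A_n$ of commutative connected algebraic subgroups, each isomorphic to $\mathbb{G}_a^{a_n} \times \mathbb{G}_m^{b_n}$. On the other hand, every element of $\varphi(U)$ is divisible in $\Aut(X)$ (since $U \simeq \mathbb{G}_a$ is divisible and $\varphi$ is a group isomorphism), hence algebraic by Proposition~\ref{divisiblealgebraic}. Because $\varphi(U) \simeq \mathbb{G}_a$ is torsion-free and divisible, its image in the discrete quotient $V/V^\circ$ must be trivial, so $\varphi(U) \subseteq V^\circ$.

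Finally, I would identify $\varphi(U)$ as a closed algebraic subgroup of $V^\circ$ by exploiting the normalizing action of $\varphi(T)$. Since $T$ acts on $U \simeq \mathbb{G}_a$ by a non-trivial character $\chi \colon T \to \mathbb{G}_m$, the relations $[U,U] = 1$ and $tut^{-1} = \chi(t)\cdot u$ in $\Aut(S)$ transfer via $\varphi$ to the semidirect product $\varphi(U) \rtimes \varphi(T)$. Inside each algebraic piece $A_n \simeq \mathbb{G}_a^{a_n} \times \mathbb{G}_m^{b_n}$, these relations force $\varphi(U) \cap A_n$ to coincide with the weight-$\chi$ subspace of $A_n$ under the $\varphi(T)$-action, which is a closed algebraic subgroup; taking the union over $n$ identifies $\varphi(U)$ with a closed subgroup of $V^\circ$, hence of $\Aut(X)$.

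The main obstacle is making the weight decomposition of $V^\circ$ rigorous, since $\varphi(T)$ is not a priori a closed subtorus of $\Aut(X)$ but merely an abstract divisible abelian subgroup (whose elements are nevertheless algebraic by Proposition~\ref{divisiblealgebraic}). I plan to handle this by replacing $\varphi(T)$ on each $A_n$ with the closed algebraic subgroup generated by any single element of $\varphi(T)$ of infinite order acting with weight $\chi$ on $A_n$, and by invoking Lemma~\ref{LRULemma4.15} on the preimage side (applied to $\varphi^{-1}(V) \supset U$ in $\Aut(S)$) to rule out any residual torsion contribution from $V/V^\circ$.
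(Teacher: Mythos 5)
Your overall strategy (pin down $\varphi(U)$ inside its closure via the conjugation action of $\varphi(T)$) is in the right spirit, but the decisive step is missing. The paper's proof rests on the purely group-theoretic identity
\[
U=\{u\in\Aut(S)\mid t_0\circ u\circ t_0^{-1}=u^2\}
\]
for a suitable $t_0\in T$, and the proof of this identity is where all the geometry enters: any $h$ satisfying the relation generates a closed subgroup whose connected component is one-dimensional (using that $S$ is non-toric, via \cite[Lemma 4.17]{LRU18}), hence a root subgroup with respect to $T$, and then Lemma~\ref{hyperbolic} --- on a hyperbolic non-toric surface all root subgroups have pairwise different weights --- forces that root subgroup to be $U$. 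Once this identity is available, $\varphi(U)=\{v\in\Aut(X)\mid \varphi(t_0)\circ v\circ\varphi(t_0)^{-1}=v^2\}$ is closed for free, being the fibre over $\id_X$ of an ind-morphism; no appeal to Proposition~\ref{CRX} or to the closure $V$ is needed. In your write-up the corresponding step is the assertion that $\varphi(U)\cap A_n$ \emph{coincides with} the weight-$\chi$ subspace of $A_n$: the inclusion of $\varphi(U)\cap A_n$ into that weight space is fine, but the reverse inclusion is exactly the content of the displayed identity, and you give no argument for it --- in particular you never invoke Lemma~\ref{hyperbolic}, without which nothing prevents $\varphi(U)\cap A_n$ from being a proper non-closed divisible subgroup of the weight space (e.g.\ a $\mathbb{Q}$-line in $\mathbb{G}_a^k$), in which case the union over $n$ is not closed.

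Two further problems. First, the relation $tut^{-1}=\chi(t)\cdot u$ is not a group-theoretic relation when $\chi(t)\notin\mathbb{Z}$, so it does not transfer through the isomorphism $\varphi$; this is precisely why one must fix a single $t_0$ with $\chi(t_0)=2$ and work with the relation $t_0\circ u\circ t_0^{-1}=u^2$. Second, your argument that $\varphi(U)\subseteq V^\circ$ because a divisible torsion-free group must have trivial image in the countable quotient $V/V^\circ$ is incorrect: $(\mathbb{C},+)$ is divisible and torsion-free yet surjects onto $\mathbb{Q}$. (This particular point can be repaired as in the paper's Claim~\ref{connectedimageroot}, using that $\varphi(T)$ acts transitively on $\varphi(U)\setminus\{\id_X\}$ and that a countable-index subgroup meets the uncountable set $\varphi(U)\setminus\{\id_X\}$, but it is a gap as written.)
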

 
 The subgroup $U \subset \Aut(S)$ is normalized by $T$ and in particular, there is $t_0 \in T$ such that 
\[
t_0 \circ u \circ t_0^{-1} = u^2,
\]
where $u \in U$.
We claim that
\begin{equation}\label{rootsubgroupequalscharacter}
U = \{ u \in \Aut(S) \mid t_0 \circ u \circ t_0^{-1} = u^2 \}.
\end{equation}
Indeed, if there is some $h \in \Aut(S) \setminus U$ such that $t_0 \circ h \circ t_0^{-1} = h^2$, then the group generated by $t_0$ normalizes the group generated by $h$. Hence, $T = \overline{\langle t_0 \rangle}$ normalizes  $\overline{\langle h \rangle}^\circ$. Observe that $\overline{\langle h \rangle}^\circ$ is one-dimensional algebraic subgroup since otherwise  $\overline{\langle h \rangle}^\circ$ would contain a two-dimensional commutative non-unipotent algebraic subgroup 
which contradicts the assumption that $S$ is non-toric (see  \cite[Lemma 4.17]{LRU18}).
Hence, $\overline{\langle h \rangle}^\circ \simeq \mathbb{G}_a$ (see \cite[Lemma 4.10]{LRU18}). In another words, $\overline{\langle h \rangle}^\circ \subset \Aut(S)$ is the root subgroup with respect to $T$. Since all root subgroups of $\Aut(S)$ with respect to $T$ have different weights (see Lemma \ref{hyperbolic}) we have that $\overline{\langle h \rangle}^\circ \subset U$ which proves \eqref{rootsubgroupequalscharacter}. The Claim \ref{closedrootsubgroup} follows from \eqref{rootsubgroupequalscharacter}.

\begin{claim}\label{connectedimageroot}
$\varphi(U) = \varphi(U)^\circ$.
\end{claim}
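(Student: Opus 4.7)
The plan is to exploit the $\varphi(T)$-conjugation action on $\varphi(U)$ to show that $\varphi(U)^\circ$ is a $\mathbb{C}$-linear subspace of $\varphi(U) \cong \mathbb{G}_a$, and then rule out the trivial case by a cardinality argument. Since $T$ normalizes $U$ in $\Aut(S)$, $\varphi(T)$ normalizes $\varphi(U)$ in $\Aut(X)$, and because conjugation on $\Aut(X)$ is continuous in the ind-topology and fixes $\id$, it preserves the connected component $\varphi(U)^\circ$.

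The upgrading step is as follows. Fix the abelian group isomorphism $\varphi(U) \cong (\mathbb{C},+)$ obtained by composing $\varphi|_U$ with a chosen isomorphism $U \cong \mathbb{G}_a$. The weight relation $t u t^{-1} = \mu(t)\, u$ (where $\mu\colon T \to \mathbb{G}_m$ is the weight of $U$, a nontrivial character since $S$ is non-toric, as noted earlier in the proof) transports through $\varphi$ to $\varphi(t)\,\varphi(u)\,\varphi(t)^{-1} = \mu(t)\,\varphi(u)$, so that $\varphi(T)$-conjugation on $\varphi(U)$ realizes scalar multiplication by each element of $\mu(T) = \mathbb{C}^*$. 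Consequently $\varphi(U)^\circ$ is a subgroup of $\varphi(U) \cong \mathbb{C}$ invariant under multiplication by every nonzero scalar, i.e., a $\mathbb{C}$-linear subspace of a one-dimensional $\mathbb{C}$-vector space, and therefore $\varphi(U)^\circ$ is either $\{\id\}$ or all of $\varphi(U)$.

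To exclude $\varphi(U)^\circ = \{\id\}$, set $\varphi(U)_k := \varphi(U) \cap \Aut(X)_k$. If $\varphi(U)^\circ = \{\id\}$, then no positive-dimensional irreducible component of any $\varphi(U)_k$ passes through $\id$. But for any irreducible component $C$ of some $\varphi(U)_j$ and any $v \in C$, multiplication by $v^{-1}$ is an ind-morphism carrying $C$ isomorphically onto an irreducible subset of some $\varphi(U)_l$ through $\id$, which forces $\dim C = 0$ and hence $C = \{v\}$. Thus every $\varphi(U)_k$ would be finite and $\varphi(U)$ countable, contradicting $\varphi(U) \cong (\mathbb{C},+)$. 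Combined with the previous paragraph, this yields $\varphi(U)^\circ = \varphi(U)$. The main subtlety lies in the upgrading step: a subgroup of $(\mathbb{C},+)$ can a priori be rather wild, but invariance under the full scalar action of $\mathbb{C}^*$ forces it to be a $\mathbb{C}$-subspace and hence one of only two possibilities; this invariance in turn hinges on the non-toric hypothesis through the nontriviality of the weight $\mu$.
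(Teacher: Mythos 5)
Your proof is correct and follows essentially the same route as the paper: the ``$\mathbb{C}$-linear subspace'' step is just a repackaging of the paper's observation that $\varphi(T)$ acts transitively on $\varphi(U)\setminus\{\id_X\}$ by conjugation (via the surjective weight $\mu$), so $\varphi(U)^\circ\setminus\{\id_X\}$ is either empty or everything. The only difference is cosmetic: where the paper rules out the trivial case by citing that $\varphi(U)^\circ$ has countable index in the uncountable group $\varphi(U)$, you prove that fact directly by translating irreducible components of the filtration pieces $\varphi(U)_k$ to the identity.
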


	 As $T$ acts transitively on $U \setminus \{\id_S\}$ by conjugations, 
		it follows that
		$\varphi(T)$ acts transitively on $\varphi(U) \setminus \{\id_X\}$ by conjugations. Note that
		$\varphi(U)^\circ \setminus \{ \id_X \}$ is a subset of $\varphi(U) \setminus \{\id_X\}$ that is left
		invariant under the $\varphi(T)$-action. As $\varphi(U)$ is uncountable
		and $\varphi(U)^\circ$ has countable index in $\varphi(U)$, it follows
		that $\varphi(U)^\circ \setminus \{ \id_X \}$ is non-empty.
		Hence,  $\varphi(U)^\circ \setminus \{ \id_X \} = \varphi(U) \setminus \{\id_X\}$ and thus 
		$\varphi(U)^\circ = \varphi(U)$. This proves Claim~\ref{connectedimageroot}.

\begin{claim}\label{connectedt}
$\overline{\varphi(T)}^\circ$ is isomorphic to $\mathbb{G}_m$.
\end{claim}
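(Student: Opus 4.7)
The plan is to pin down $\overline{\varphi(T)}^\circ$ by combining Proposition \ref{CRX}, the divisibility--algebraicity correspondence of Proposition \ref{divisiblealgebraic}, and Lemma \ref{LRULemma4.15}, using the latter as a rigidity tool on the $\Aut(S)$-side.

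First I would verify that $Z^\circ = T$. The closed connected abelian group $Z^\circ$ is, by Proposition \ref{CRX}, a union of commutative connected algebraic subgroups of $\Aut(S)$, so every element is algebraic. Then Lemma \ref{LRULemma4.15} applied to $Z^\circ$ (abelian, containing $T$, with algebraic elements) yields an abstract isomorphism $Z^\circ\simeq T\times F$ with $F$ finite; since the coset space $Z^\circ/T$ is simultaneously connected and finite, $F$ must be trivial.

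Next I would observe that $\overline{\varphi(T)}\subset \varphi(Z)$: its preimage is an abelian subgroup of $\Aut(S)$ contained in the centralizer of $T$ and containing $T$, so by maximality it lies in $Z$. Consequently $\overline{\varphi(T)}^\circ \subset \varphi(Z)^\circ$. Applying Proposition \ref{CRX} to $\overline{\varphi(T)}^\circ$ itself gives an ascending filtration $\overline{\varphi(T)}^\circ = \bigcup_i H_i$ by commutative connected algebraic subgroups, each $H_i \simeq \mathbb{G}_a^{a_i}\times \mathbb{G}_m^{m_i}$. For $g\in H_i$, a suitable power $g^k$ is divisible (every connected commutative complex algebraic group is divisible), so $\varphi^{-1}(g)^k = \varphi^{-1}(g^k)$ is divisible in $\Aut(S)$, and Proposition \ref{divisiblealgebraic} makes $\varphi^{-1}(g)$ algebraic. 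Thus $\varphi^{-1}(H_i)\cdot T \subset Z$ is abelian, contains $T$, and consists of algebraic elements, so Lemma \ref{LRULemma4.15} gives $\varphi^{-1}(H_i)\cdot T \simeq T\times F_i$ with $F_i$ finite, embedding each $H_i$ abstractly in $\mathbb{C}^*\times F_i$.

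To finish, one analyzes the $H_i$. A $p$-torsion comparison at primes $p\nmid |F_i|$ between the rank-$m_i$ $p$-torsion of $\mathbb{G}_m^{m_i}$ and the cyclic $p$-torsion of $\mathbb{C}^*$ forces $m_i\le 1$. Once the $\mathbb{G}_a$-factors are excluded one has $H_i\simeq \mathbb{G}_m$ or $H_i$ trivial, and the ascending chain of one-dimensional tori stabilizes to give $\overline{\varphi(T)}^\circ\simeq \mathbb{G}_m$. The main obstacle is precisely the exclusion of those $\mathbb{G}_a$-factors: abstract group theory permits embeddings $(\mathbb{C},+)\hookrightarrow \mathbb{C}^*\times F_i$ (e.g.\ via $\mathbb{R}_{>0}\subset \mathbb{C}^*$), so neither torsion nor cardinality alone suffices. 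The resolution must exploit finer properties of $\Aut(S)$: any $\mathbb{G}_a \subset \overline{\varphi(T)}^\circ$ would pull back to an abstract $(\mathbb{C},+)$-subgroup of $Z$ whose elements are algebraic, which should be ruled out using \cite[Lemma 4.17]{LRU18} together with the fact that all root subgroups of $\Aut(S)$ with respect to $T$ have nonzero weight because $S$ is non-toric, so no $\mathbb{G}_a$ commutes with $T$ in $\Aut(S)$.
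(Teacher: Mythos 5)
Your proposal follows the paper's overall strategy (Proposition \ref{CRX} to filter $\overline{\varphi(T)}^\circ$ by connected commutative algebraic groups, divisibility of their elements plus Proposition \ref{divisiblealgebraic} to make the preimages algebraic, Lemma \ref{LRULemma4.15} to trap those preimages in $T\times F$, and a $p$-torsion count to force the torus rank to be $1$), and those parts are sound. But the step you yourself single out as the main obstacle --- excluding $\mathbb{G}_a$-factors from the $G_i$ (your $H_i$) --- is genuinely not closed by your argument, and the fix you sketch cannot work as stated. If $K\simeq\mathbb{G}_a$ sits inside some $G_k$, your machinery correctly puts $\varphi^{-1}(K)$ inside $T$ (divisible elements of $T\times F$ lie in $T$), but this is \emph{not yet} a contradiction: $T\simeq\mathbb{C}^*$ contains abstract subgroups isomorphic to $(\mathbb{C},+)$ (both are $\mathbb{Q}$-vector spaces of continuum dimension up to the torsion part of $\mathbb{C}^*$), and every element of $T$ is algebraic, so ``an abstract $(\mathbb{C},+)$-subgroup of $Z$ with algebraic elements'' cannot be ruled out; likewise ``no $\mathbb{G}_a$ commutes with $T$'' does not apply, since $\varphi^{-1}(K)$ is only an abstract subgroup, not an algebraic $\mathbb{G}_a$.

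The paper's resolution brings $\varphi(U)$ into play, which your proof never uses. Since $U\simeq\mathbb{G}_a$ is torsion-free, the filtration of $\varphi(U)$ from Proposition \ref{CRX} consists of vector groups $H_l\simeq\mathbb{G}_a^{r_l}$, and $\overline{\varphi(T)}^\circ$ acts on $\varphi(U)$ by conjugation. By Lie--Kolchin the unipotent group $K$ fixes some $v\neq\id_X$ in some $H_l$, i.e.\ $K$ commutes with $v$. Pulling back, $\varphi^{-1}(K)\subset T$ commutes with $u=\varphi^{-1}(v)\in U\setminus\{\id_S\}$; since $\varphi^{-1}(K)$ is uncountable it is Zariski-dense in $T$, so $T$ centralizes $\overline{\langle u\rangle}=U$, contradicting the fact that the root subgroup $U$ has nonzero weight ($S$ being non-toric). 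This use of the fixed vector in $\varphi(U)$ is the missing idea; without it the $\mathbb{G}_a$-exclusion, and hence the claim, is not proved. (Your preliminary observations that $Z^\circ=T$ and $\overline{\varphi(T)}\subset\varphi(Z)$ are correct but are not needed for the argument.)
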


The subgroup $\overline{\varphi(T)}^\circ \subset \Aut(X)$ is  closed. Moreover,  $\varphi(T)$ acts on $\varphi(U)$ by conjugations and hence, $\overline{\varphi(T)}^\circ$ acts on $\overline{\varphi(U)}^\circ = \varphi(U)$ by conjugations.
By Proposition \ref{CRX} the subgroups $\overline{\varphi(T)}^\circ,\varphi(U)\subset \Aut(X)$ are the unions of commutative connected algeraic groups $\bigcup_{i \ge 1} G_i$ and $\bigcup_{i \ge 1} H_i$ respectively. Moreover, since $\varphi(U)$ does not contain elements of finite order, $H_i \simeq \mathbb{G}_a^{r_i}$ for each $i \ge 1$.
Assume there is $k \in \mathbb{N}$ such that $G_k$ contains an algebraic subgroup $K \simeq \mathbb{G}_a$. Then   it follows from the Lie-Kolchin Theorem (see \cite[§17.6]{Hum75}) that there is $l \in \mathbb{N}$ and $v \in H_l$ such that $K$ fixes $v$. Equivalently, $\varphi^{-1}(K)$ fixes $\varphi^{-1}(v) = u \in U$. 

All elements of $\varphi^{-1}(K)$ are divisible which in turn implies that all elements of $\varphi^{-1}(K)$ are algebraic (see Proposition \ref{divisiblealgebraic}). Hence,
by Lemma 
\ref{LRULemma4.15} the
commutative subgroup $\varphi^{-1}(K) \subset \Aut(S)$
is a subgroup of $T \times F$ for some finite group $F$.
 Since all divisible elements of $T \times F$ are contained in $T$ we conclude that $\varphi^{-1}(K) \subset T$. Further, because 
 $\varphi^{-1}(K)$ fixes $\varphi^{-1}(v) = u \in U$ it follows that  $T = \overline{\varphi^{-1}(K)}$ acts trivially on  $\overline{\langle u  \rangle} = U$ which is not the case.
Therefore, $G_i \simeq \mathbb{G}_m^{r_i}$ for some $r_i \in \mathbb{N}$.
 Moreover, since all elements of $\mathbb{G}_m^{r_i}$ are divisible, similarly as above, $\varphi^{-1}(G_i) \subset T$. Hence, there is no copy of 
  $(\mathbb{Z}/p\mathbb{Z})^2$ in $\varphi^{-1}(G_i)$, where $p > 1$ and $i \in \mathbb{N}$. This implies that $G_i \simeq \mathbb{G}_m$ for any $i$ and we conclude that 
   $\overline{\varphi(T)}^\circ \simeq \mathbb{G}_m$.
 
 \begin{claim}\label{connectedtorus}
$\varphi(T)$ is isomorphic to $\mathbb{G}_m$ and $\varphi(U)$ is isomorphic to $\mathbb{G}_a$.
\end{claim}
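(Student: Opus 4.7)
The plan is to establish the two assertions in sequence, first proving $\varphi(U) \simeq \mathbb{G}_a$ and then $\varphi(T) \simeq \mathbb{G}_m$.

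For $\varphi(U) \simeq \mathbb{G}_a$, I would build on the filtration $\varphi(U) = \bigcup_i H_i$ with $H_i \simeq \mathbb{G}_a^{r_i}$ obtained in the proof of Claim \ref{connectedt}. Set $T' := \overline{\varphi(T)}^\circ \simeq \mathbb{G}_m$. For any $v \in \varphi(U)$, the $T'$-orbit under conjugation is a constructible subset of $\Aut(X)$ of dimension at most $1$, and since $\overline{\varphi(T)}$ has only countably many cosets of $T'$, the full $\overline{\varphi(T)}$-orbit on $v$ is a countable union of constructible sets of dimension at most $1$. As $\varphi(T) \subset \overline{\varphi(T)}$ acts transitively on $\varphi(U) \setminus \{\id_X\}$ (Claim \ref{connectedimageroot} transports the transitivity of $T$ on $U \setminus \{\id_S\}$), the set $\varphi(U) \setminus \{\id_X\}$ is a countable union of at most 1-dimensional constructible subsets. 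An irreducible algebraic variety of dimension $\ge 2$ cannot be so covered, so $r_i \le 1$ for all $i$. The ascending chain of $\mathbb{G}_a$-subgroups must then stabilize, giving $\varphi(U) \simeq \mathbb{G}_a$.

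For $\varphi(T) \simeq \mathbb{G}_m$ I would prove $\varphi(T) = T'$ by a double inclusion. For $T' \subset \varphi(T)$, form $H := T \cdot \varphi^{-1}(T') \subset \Aut(S)$. It is abelian, since $T'$ is central in the abelian group $\overline{\varphi(T)}$, so $\varphi^{-1}(T')$ commutes with $T$. Every element of $H$ is divisible: both $T$ and $\varphi^{-1}(T')$ consist of divisible elements (the latter as abstract image of $T' \simeq \mathbb{G}_m$), and divisibility is preserved under commuting products in an abelian group. Hence every element of $H$ is algebraic by Proposition \ref{divisiblealgebraic}. Lemma \ref{LRULemma4.15} then gives $H \simeq T \times F$ with $F$ finite, whose divisible subgroup is $T$; thus $\varphi^{-1}(T') \subset T$, i.e.\ $T' \subset \varphi(T)$.

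For $\varphi(T) \subset T'$, suppose for contradiction there exists $g \in \varphi(T) \setminus T'$. Since $g$ is divisible, it is algebraic (Proposition \ref{divisiblealgebraic}), so $\overline{\langle g \rangle}$ is an algebraic subgroup of $\Aut(X)$. The connected subset $\overline{\langle g \rangle}^\circ$ maps continuously to the component group $\overline{\varphi(T)}/T'$, which is discrete because the cosets of $T'$ are clopen in the ind-topology of $\overline{\varphi(T)}$. A continuous map from a connected space to a discrete space is constant, so $\overline{\langle g \rangle}^\circ \subset T'$. Since $\overline{\langle g \rangle}/\overline{\langle g \rangle}^\circ$ is finite, some power $g^m$ lies in $T'$, and the class $gT'$ has finite order $k \ge 2$ in $\varphi(T)/T'$. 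Combined with the first inclusion, $\langle g, T' \rangle \subset \varphi(T)$ as abstract subgroups, and since $T'$ is divisible the extension splits as $\langle g, T' \rangle \simeq T' \times \mathbb{Z}/k$. Thus $\mathbb{G}_m \times \mathbb{Z}/k$ embeds abstractly in $\varphi(T) \simeq \mathbb{G}_m$, which contradicts the fact that for any prime $p$ dividing $k$, the $p$-torsion socle of $\mathbb{G}_m \times \mathbb{Z}/k$ has $\mathbb{F}_p$-dimension $2$ while that of $\mathbb{G}_m$ has dimension $1$.

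The main obstacle I expect is the inclusion $\varphi(T) \subset T'$: because $\varphi$ is only an abstract group isomorphism, we cannot directly transfer the algebraic structure of $T$ to $\varphi(T)$. The resolution combines the ind-group topology (discreteness of the component group, which forces $\overline{\langle g \rangle}^\circ \subset T'$) with a purely group-theoretic $p$-rank invariant that separates $\mathbb{G}_m$ from $\mathbb{G}_m \times \mathbb{Z}/k$.
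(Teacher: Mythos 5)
Your argument for $\varphi(U)\simeq\mathbb{G}_a$ --- covering the filtration pieces $H_i\simeq\mathbb{G}_a^{r_i}$ by countably many constructible $T'$-orbits of dimension at most one --- is correct, and it is a genuinely different route from the paper's: the paper obtains $\varphi(U)\simeq\mathbb{G}_a$ only at the very end, after showing $\varphi(T)=\overline{\varphi(T)}^\circ$ and using that this torus acts transitively with finite kernel on $\varphi(U)\setminus\{\id_X\}$, whereas your counting argument is self-contained and could even be run first. Your first inclusion $T'\subset\varphi(T)$ coincides with the paper's argument.

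The gap is in the inclusion $\varphi(T)\subset T'$. You deduce that $g\in\varphi(T)\setminus T'$ is algebraic from Proposition \ref{divisiblealgebraic}, but that proposition is stated only for an affine irreducible \emph{surface}, and $g$ lives in $\Aut(X)$, where $X$ is an arbitrary affine $\mathbb{G}_m$-variety whose dimension is precisely what Theorem \ref{main2} is trying to determine; throughout the paper the proposition is only ever applied to elements of $\Aut(S)$. Without algebraicity of $g$ you cannot conclude that $\overline{\langle g\rangle}/\overline{\langle g\rangle}^\circ$ is finite (for a general closed subgroup of an ind-group the component group is merely countable), so you get no power $g^m\in T'$, and the class $gT'$ need not be torsion. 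The purely group-theoretic endgame cannot absorb this: $\varphi(T)/T'$ is a countable divisible abelian group, so it could a priori be $\mathbb{Q}$, and $\mathbb{C}^*\times\mathbb{Q}\cong\mathbb{C}^*$ as abstract groups, so no torsion or $p$-rank invariant excludes that case. The paper closes this hole differently: both $T'$ and $\varphi(T)$ act transitively on $\varphi(U)\setminus\{\id_X\}$ with finite kernels, which forces $[\varphi(T):T']<\infty$, and then divisibility of $\mathbb{C}^*$ (no proper finite-index subgroups) gives equality. Your argument can be repaired in the same spirit once $\varphi(U)\simeq\mathbb{G}_a$ is known: conjugation yields a homomorphism $\varphi(T)\to\Aut(\mathbb{G}_a)\simeq\mathbb{C}^*$ with finite kernel whose restriction to $T'$ is already surjective, whence the finite index.
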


We first claim that $\varphi(T) = \overline{\varphi(T)}^\circ$. Indeed,
since all elements of $\varphi^{-1}(\overline{\varphi(T)}^\circ)$ are divisible, by Proposition \ref{divisiblealgebraic} they are all algebraic. 
By Lemma \ref{LRULemma4.15} the group  $\varphi^{-1}(\overline{\varphi(T)}^\circ)$  is a subgroup of $T \times F$ for some finite group $F$. Moreover, since all divisible elements of $T \times F$ are contained in $T$ we conclude that $\varphi^{-1}(\overline{\varphi(T)}^\circ) \subset T$ or equivalently $\overline{\varphi(T)}^\circ \subset \varphi(T)$.
 Because both 
$\overline{\varphi(T)}^\circ$ and $\varphi(T)$ act on $\varphi(U)\setminus \{ \id_X \}$ transitively with  finite kernels, it follows that the subgroup 
$\overline{\varphi(T)}^\circ \subset \varphi(T)$ has a finite index. Finally, because $\overline{\varphi(T)}^\circ$ and $\varphi(T)$, seen as abstract groups, are isomorphic to $\mathbb{G}_m$, the torsion subgroups of $\overline{\varphi(T)}^\circ$ and $\varphi(T)$ coincide and we conclude that 
 $\overline{\varphi(T)}^\circ = \varphi(T) \simeq \mathbb{G}_m$. Since $\varphi(T)$ acts on $\varphi(U) \setminus \{ \id_X \}$ transitively  with a finite kernel, $\varphi(U)$ is a one-dimensional algebraic group which implies that $\varphi(U) \simeq \mathbb{G}_a$ (see \cite[Lemma 4.10]{LRU18}).

\begin{claim}\label{connectedtorus}
$\varphi(T)$ acts on the invariant ring $\mathcal{O}(X)^{\varphi(U)}$ multiplicity freely.
\end{claim}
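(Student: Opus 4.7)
The plan is to argue by contradiction and transfer the failure of multiplicity-freeness on $X$ to a violation of Lemma \ref{hyperbolic} on $S$. Suppose that $\mathcal{O}(X)^{\varphi(U)}$ contains two linearly independent $\varphi(T)$-semi-invariants $f_1, f_2$ of the same weight $\mu$. Form the modifications $V_i := f_i \cdot \varphi(U) \subset \Aut(X)$ for $i=1,2$; by the last remark of Section \ref{rootsubgroups} these are root subgroups of $\Aut(X)$ with respect to $\varphi(T)$, and an elementary conjugation computation shows they share the same weight (the product of the weight of $\varphi(U)$ and $\mu$). The linear independence of $f_1, f_2$ forces $V_1 \ne V_2$, for otherwise $f_1/f_2$ would be a nonzero constant.

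I would then set $W_i := \varphi^{-1}(V_i) \subset \Aut(S)$, a subgroup normalized by $T$, abstractly isomorphic to $\mathbb{G}_a$; hence its elements are divisible and, by Proposition \ref{divisiblealgebraic}, algebraic in $\Aut(S)$. Moreover, since $\varphi(T)$ acts transitively on $V_i \setminus \{\id_X\}$ by conjugation, so does $T$ on $W_i \setminus \{\id_S\}$. The main technical step is then to identify each $W_i$ with an honest root subgroup of $\Aut(S)$ with respect to $T$. Pick $t_0 \in T$ so that the weight character of $V_i$ evaluates to $2$ on $\varphi(t_0)$; since this weight character has the form $s \mapsto s^n$ with $n \ne 0$, the element $\varphi(t_0) \in \mathbb{C}^*$ is an $n$-th root of $2$, which has infinite order, and hence so does $t_0$. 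Then $t_0 w t_0^{-1} = w^2$ for every $w \in W_i$. Reproducing the argument for equation \eqref{rootsubgroupequalscharacter}: for any non-identity $w$ satisfying this relation, $\overline{\langle w \rangle}^\circ \subset \Aut(S)$ is a one-dimensional algebraic subgroup (by non-toricity of $S$), isomorphic to $\mathbb{G}_a$ via \cite[Lemma 4.10]{LRU18}, and therefore a root subgroup of $\Aut(S)$ with respect to $T$ whose algebraic weight $\psi$ satisfies $\psi(t_0) = 2$; by Lemma \ref{hyperbolic} and the infinite order of $t_0$ this root subgroup $U'$ is uniquely determined. Consequently $W_i \setminus \{\id_S\} \subseteq U' \setminus \{\id_S\}$, and because both sides are single $T$-orbits under conjugation, $W_i = U'$.

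Since the same $t_0$ serves both $V_1$ and $V_2$ (they share a weight), the resulting root subgroup $U'$ is the same for $i=1,2$, forcing $W_1 = W_2 = U'$ and therefore $V_1 = V_2$, a contradiction. The principal obstacle lies in the identification $W_i = U'$: although $\varphi$ is only an abstract group isomorphism, one must nevertheless extract from the single conjugation equation enough rigidity — via Proposition \ref{divisiblealgebraic}, Lemma \ref{hyperbolic}, and the non-toricity of $S$ — to force $W_i$ to coincide with a specific algebraic root subgroup of $\Aut(S)$.
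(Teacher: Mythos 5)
Your proposal is correct and follows the paper's overall strategy --- argue by contradiction, modify $\varphi(U)$ by the two semi-invariants of equal weight, pull the resulting root subgroups back to $\Aut(S)$, and invoke Lemma \ref{hyperbolic} --- but both key sub-steps are carried out differently. To see that each preimage is an honest algebraic root subgroup of $\Aut(S)$, the paper observes that its non-identity part is a single $T$-orbit, hence a quasi-affine curve, so that the preimage coincides with its closure and is an algebraic subgroup normalized by $T$, isomorphic to $\mathbb{G}_a$ by \cite[Lemma 4.10]{LRU18}; you instead re-run the argument behind \eqref{rootsubgroupequalscharacter}, exploiting the conjugation relation $t_0 w t_0^{-1}=w^2$ together with divisibility and Proposition \ref{divisiblealgebraic}. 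The endgames also differ: the paper notes that the two preimages are distinct root subgroups, hence by Lemma \ref{hyperbolic} have different weights and therefore different kernels for the $T$-action, contradicting the equality of weights on the $X$-side; you instead use the uniqueness (again Lemma \ref{hyperbolic}, plus the infinite order of $t_0$) of the root subgroup whose weight takes the value $2$ at $t_0$ to force $W_1=W_2$, hence $V_1=V_2$, contradicting linear independence. Your endgame is in fact slightly more robust, since it avoids the implication ``different weights give different kernels'', which would fail for a pair of opposite weights. Note that both versions quietly assume the common weight of the modified root subgroups on the $X$-side is non-trivial (equivalently, that $\varphi(T)$ acts transitively on $V_i\setminus\{\id_X\}$): the paper needs this for the $T$-orbit step and you need it for the existence of $t_0$; it can be supplied by the same divisibility argument used to show $\varphi(T)\simeq\mathbb{G}_m$, since a weight-zero modification would place a copy of $\mathbb{G}_a$ inside $T$.
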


 Assume towards a contradiction that $f, g \in \mathcal{O}(X)^{\varphi(U)}$ are linearly independent $\varphi(T)$-semi-invariants of the same $\varphi(T)$-weight.
  Hence, the  subgroups $f \cdot \varphi(U)$ and $g \cdot \varphi(U)$ of $\Aut(X)$ are root subgroups with respect to $\varphi(T)$ with the same weight. Then $\varphi^{-1}(f \cdot \varphi(U))$ and $\varphi^{-1}(g \cdot \varphi(U))$ are root subgroups with respect to $\varphi^{-1}(\varphi(T)) = T$.
Indeed, $\varphi^{-1}(f \cdot \varphi(U)) \setminus \{ \id_S \}$ is a $T$-orbit and hence is a quasi-affine curve in $\Aut(S)$. Therefore, $\varphi^{-1}(f \cdot \varphi(U)) =
(\varphi^{-1}(f \cdot \varphi(U)) \setminus \{ \id_S \}) \circ 
(\varphi^{-1}(f \cdot \varphi(U)) \setminus \{ \id_S \})
=\overline{\varphi^{-1}(f \cdot \varphi(U))}$ is an algebraic subgroup of $\Aut(S)$ that is normalized by algebraic torus $T$. Finally, by \cite[Lemma 4.10]{LRU18} $\varphi^{-1}(f \cdot \varphi(U)) \simeq \mathbb{G}_a$ and analogously 
 $\varphi^{-1}(g \cdot \varphi(U)) \simeq \mathbb{G}_a$.

  By Lemma \ref{hyperbolic}   root subgroups $\varphi^{-1}(f \cdot \varphi(U))$ and $\varphi^{-1}(g \cdot \varphi(U))$ with respect to $T$
  have different weights which means that $T$ acts on $\varphi^{-1}(f \cdot \varphi(U))$ and $\varphi^{-1}(g \cdot \varphi(U))$ with different kernels. This contradicts the assumption that $f \cdot \varphi(U)$ and $g \cdot \varphi(U)$ are $\varphi(T)$-root subgroups with the same weight. The claim follows.

	\medskip

Since $\varphi(T)$ acts on the invariant ring $\mathcal{O}(X)^{\varphi(U)}$ multiplicity freely, by Lemma \ref{Kr15local} $\dim X \le 2$. 
Moreover, we claim that $\dim X \neq 1$.
Indeed, if $\dim X = 1$, then since $X$ admits an action of $\varphi(U) \simeq \mathbb{G}_a$,
$X$ is isomorphic to the affine line $\mathbb{A}^1$.
Hence,   $\Aut(X=\mathbb{A}^1)$ is a two-dimensional algebraic group.
But this is not possible since
there is a non-trivial semi-invariant
$f \in \mathcal{O}(S)^U$ and then $f \cdot U$ is a root subgroup of $\Aut(S)$ with respect to $T$ that is different from $U$. Therefore, by Claim \ref{connectedtorus} $\varphi(U)$ and $\varphi(f \cdot U)$ are root subgroups of $\Aut(X)$ with respect to $\varphi(T)$, i.e. $\Aut(X)$ contains a three-dimensional algebraic subgroup  $\varphi(T) \ltimes (\varphi(U) \times \varphi(f\cdot U))$.
 This proves that  $\dim X = 2$. Finally, by \cite[Theorem 1.3]{LRU18} $X$ is non-toric and by
\cite[Theorem 1.2]{LRU18}  $X$ is a hyperbolic $\varphi(T)\simeq \mathbb{G}_m$-surface.

\section{Proof of Theorem \ref{main1}}

In the first paragraph of the proof of \cite[Proposition 1]{Sie96} it is shown that the Lie algebra of vector fields of a singular affine variety $X$ is non-simple. The same proof is suitable for the next statement. 
For the convenience of the reader we provide the detailed proof.

\begin{proposition}\label{non-simplicity}
Assume  $X$ is a singular affine variety and assume that  $\lielnd{X}$ is non-trivial. Then the Lie algebra $\lielnd{X}$ is not simple.
\end{proposition}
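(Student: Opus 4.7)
The plan is to follow Siebert's construction and produce a proper non-zero Lie ideal of $\lielnd{X}$ using powers of the ideal of the singular locus. Let $J \subseteq \mathcal{O}(X)$ denote the ideal of $\Sing(X)$. Since $X$ is irreducible and singular, $J$ is a proper non-zero ideal of the Noetherian domain $\mathcal{O}(X)$. The starting observation is that $D(J) \subseteq J$ for every $D \in \lielnd{X}$: each locally nilpotent derivation integrates to a regular $\mathbb{G}_a$-action which preserves $\Sing(X)$, hence preserves $J$; and the property $D(J) \subseteq J$ is stable under $\mathbb{C}$-linear combinations and Lie brackets. Consequently $D(J^n) \subseteq J^n$ for every $n \geq 1$. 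For each $n \geq 1$ I set
\[
\mathcal{I}_n := \{\, D \in \lielnd{X} : D(\mathcal{O}(X)) \subseteq J^n \,\};
\]
a short calculation using $D_1(J^n) \subseteq J^n$ shows that each $\mathcal{I}_n$ is a Lie ideal of $\lielnd{X}$, and $\lielnd{X} \supseteq \mathcal{I}_1 \supseteq \mathcal{I}_2 \supseteq \cdots$. It therefore suffices to exhibit an $n$ for which $\mathcal{I}_n$ is both proper and non-zero.

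Properness follows from Krull's intersection theorem applied to the Noetherian domain $\mathcal{O}(X)$ and the proper ideal $J$: since $\bigcap_n J^n = 0$, fixing any non-zero $D_0 \in \lielnd{X}$ (which exists by hypothesis) and any $g \in \mathcal{O}(X)$ with $D_0(g) \neq 0$, one has $D_0(g) \notin J^{n_0}$ for some $n_0$, so $D_0 \notin \mathcal{I}_{n_0}$. For non-vanishing, I would choose a non-zero LND $\partial \in \lielnd{X}$ together with a non-zero invariant $f \in \mathcal{O}(X)^\partial \cap J$; then the modification $f^{n_0}\cdot\partial$ recalled in Section~\ref{rootsubgroups} is again locally nilpotent, hence lies in $\lielnd{X}$, and $(f^{n_0}\partial)(g) = f^{n_0}\partial(g) \in J^{n_0}$ for every $g \in \mathcal{O}(X)$, yielding $f^{n_0}\partial \in \mathcal{I}_{n_0} \setminus \{0\}$.

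The main obstacle is producing the invariant $f$. I would argue geometrically on the $\mathbb{G}_a$-quotient $\pi\colon X \to \Spec \mathcal{O}(X)^\partial$: on the open locus of $\Spec \mathcal{O}(X)^\partial$ where $\mathbb{G}_a$ acts freely on fibers, the generic fiber is a one-dimensional orbit containing no $\partial$-fixed point, hence disjoint from any component of $\Sing(X)$ on which $\partial$ vanishes; and on any component of $\Sing(X)$ on which $\partial$ is non-trivial, a transcendence-degree comparison forces $\pi$ to drop dimension. Together these show $\overline{\pi(\Sing(X))}$ is a proper closed subset of $\Spec \mathcal{O}(X)^\partial$, so any non-zero regular function on the base vanishing there pulls back to the desired $f \in \mathcal{O}(X)^\partial \cap J$, completing the construction of the non-zero element of $\mathcal{I}_{n_0}$ and hence showing $\lielnd{X}$ is not simple.
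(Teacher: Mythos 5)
Your proposal is correct and follows the same overall skeleton as the paper's proof (which is Siebert's construction): the Lie ideals are the sets of elements of $\lielnd{X}$ mapping $\mathcal{O}(X)$ into a power of the ideal of $\Sing(X)$, properness comes from separating powers of that ideal, and non-vanishing comes from multiplying a locally nilpotent vector field by a suitable power of an invariant lying in that ideal. You diverge from the paper in two steps. First, for properness the paper only asserts that $\lielnd{X,I^k}$ is proper for $k$ large; your appeal to Krull's intersection theorem is a clean and fully correct way to justify exactly that. Second, and more substantially, for producing a non-zero invariant $f \in \mathcal{O}(X)^{\partial} \cap J$ you argue geometrically via the quotient $\pi\colon X \to \Spec \mathcal{O}(X)^{\partial}$ and the image of $\Sing(X)$; this works, but it forces you to handle the fixed locus and the non-fixed components of $\Sing(X)$ separately and to be careful about what ``proper closed subset'' means when $\mathcal{O}(X)^{\partial}$ need not be finitely generated. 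The paper sidesteps all of this with a one-line algebraic trick: take any non-zero $f \in J$ and, using that $\partial$ preserves $J$, look at the last non-vanishing iterate $g = \partial^{\,l-1}(f)$; then $g \in J$, $g \neq 0$, and $\partial(g) = 0$, so $g$ is the desired invariant. Your route buys a more explicit geometric picture of where the invariant comes from, at the cost of the extra care above; the paper's route is shorter and avoids the quotient entirely.
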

\begin{proof}
Denote    by $I \subset \mathcal{O}(X)$ the ideal corresponding to the singular locus of $X$. By \cite[Theorem 5]{Sei67} any vector field $\mu \in \Ve(X)$ preserves $I$, i.e. $\mu(I) \subset I$. Hence, $\mu(I^k) \subset I^k$ for any $k \in \mathbb{N}$.  Denote by 
\[
\lielnd{X,I^k} = \{ \nu \in \lielnd{X} \mid \nu(\mathcal{O}(X))\subset I^k \}.
\]
This is the ideal of the Lie algebra $\lielnd{X}$ since 
\[
[\nu,\mu](f) = 
\nu(\mu(f)) - \mu(\nu(f)) \in I^k,
\]
where $f \in \mathcal{O}(X)$, $\nu \in \lielnd{X,I^k}$ and $\mu \in \lielnd{X}$. 
It is clear that for a big enough $k$,
$\lielnd{X,I^k} \subset \lielnd{X}$ is proper.
Finally, $\lielnd{X,I^k}$ is non-zero. Indeed, assume 
 $\mu$ a locally nilpotent vector field on $X$ and $f \in I$. Hence, there exists $l \in \mathbb{N}$ such that $\mu^l(f) = 0$ and $g=\mu^{l-1}(f) \neq 0$. This means that $\mu(g) = 0$. Note that $g \in I$ by \cite[Theorem 5]{Sei67}.
Therefore, $g^k\mu$ is a locally nilpotent vector field and $g^k\mu(\mathcal{O}(X)) \subset I^k$.
\end{proof}

\begin{proof}[Proof of Theorem \ref{main1}]
Let  $\varphi\colon \Aut(\Dp) \to \Aut(X)$ be an isomorphism of ind-groups. 
 By Lemma \ref{vectorfields}
 $\varphi$ induces the  isomorphism  of Lie algebras \[ d\varphi\colon \lielnd{\Dp}  \rightarrow  \lielnd{X}. \] 
By \cite[Theorem 1]{LR20} $\lielnd{\Dp}$ is simple. 
Hence, from  Proposition \ref{non-simplicity} it follows that $X$ is smooth. Moreover, 
by Theorem \ref{main2}  $\dim X = 2$.  Now by  \cite[Theorem 1]{LRU19} $X$ is isomorphic to some $\Dq$ for some polynomial $q$ and by \cite[Theorem 3]{LR20} $X$ is isomorphic to $\Dp$.
\end{proof}

\begin{example}\label{exampleparabolic}
 Consider the product $\mathbb{A}^1 \times C$, where $C$ is a non-rational affine curve with the trivial automorphism group and no non-constant invertible  regular functions. 
 The surface $\mathbb{A}^1 \times C$ is a $\mathbb{G}_m$-surface of parabolic type as each point of curve $C$ is the fixed $\mathbb{G}_m$-point.
 Assume $Z$ is an affine variety with the trivial automorphism group that contains no rational curves and admits no non-constant invertible regular functions.
 We claim that
 $\Aut(\mathbb{A}^1 \times C)$ and $\Aut(\mathbb{A}^1 \times Z)$
  are isomorphic as ind-groups.

   Let
  $\varphi\colon \mathbb{A}^1 \times Z \rightarrow \mathbb{A}^1 \times Z$ be an
  automorphism of $\mathbb{A}^1 \times Z$. Assume $z \in Z$. The image $\varphi(\mathbb{A}^1 \times \{ z \})$ is a subvariety of $\mathbb{A}^1 \times Z$ isomorphic to $\mathbb{A}^1$. Since $Z$ does not contain rational curves, 
  $\varphi(\mathbb{A}^1 \times \{ z \}) = \mathbb{A}^1 \times \{ z' \}$ for some $z' \in Z$. Therefore, $\varphi$ induces an automorphism of $Z$ which is trivial by assumption.  
  So, $\varphi(x,z)=(\psi(x,z),z)$, where $\psi\colon \mathbb{A}^1 \times Z \to \mathbb{A}^1$ is a regular function. For each $z \in Z$, $\psi$ induces an isomorphism of $\mathbb{A}^1$. Hence, $\psi(x) = f(z)x + g(z)$, where $f,g \in \mathcal{O}(Z)$.  Moreover, since there are no  non-constant invertible
regular functions on $Z$,  $f \in \mathbb{C}^* = \mathbb{C} \setminus \{ 0 \}$. We conclude that 
  \[
  \Aut(\mathbb{A}^1 \times Z) = \{ 
  (x,z) \mapsto (fx + g(z),z) \mid f \in \mathbb{C}^*, g \in \mathcal{O}(Z)
  \} \simeq \mathbb{G}_m \ltimes \mathcal{O}(Z),
  \]
  and 
 analogously
 \[
  \Aut(\mathbb{A}^1 \times C) = \{ 
  (x,c) \mapsto (fx + g(c),c) \mid f \in \mathbb{C}^*, g \in \mathcal{O}(C)
  \} \simeq  \mathbb{G}_m \ltimes \mathcal{O}(C).
  \]
  These two groups are isomorphic as ind-groups since there is an isomorphism of ind-groups
  $\phi\colon \mathcal{O}(Z) \to \mathcal{O}(C)$.
  This
  proves the claim.
\end{example}






\begin{thebibliography}{BCW82}
 
 \bibitem [CRX19]{CRX19}  S. Cantat, A. Regeta, and J. Xie,
 \textit{Families of commuting automorphisms, and a
characterization of the affine space}, arXiv:1912.01567 to appear in Amer. J. Math.




\bibitem [Da04]{Da04}
D. Daigle,  \emph{Locally nilpotent derivations and Danielewski surfaces}, Osaka J. Math. 41, (2004), 37--80.

\bibitem [Dan89]{Dan89}
W.  Danielewski, \emph{On  a  cancellation  problem  and  automorphism  groups  of  affine  algebraic  varieties},  Preprint,  Warsaw (1989).




 
\bibitem [FZ05a]{FZ05} 
 H. Flenner and M. Zaidenberg, \textit{On the uniqueness of $\mathbb{C}^*$-actions on affine surfaces}, Affine algebraic geometry, 97--111, Contemp. Math., 369, Amer. Math. Soc., Providence, RI, 2005.
 
\bibitem [FZ05b]{FZ05b} 
 H. Flenner and M. Zaidenberg, \textit{Locally nilpotent derivations on affine surfaces with a $\mathbb{C}^*$-action}, Osaka J. Math. 42 (2005), no. 4, 931–974.


\bibitem [FK18]{FK15}  J.-P. Furter and H. Kraft,  \textit{On the geometry of the automorphism groups of affine varieties}, 
arXiv:1809.04175

\bibitem [Hum75]{Hum75}  James E. Humphreys, \textit{Linear algebraic groups}, Springer-Verlag, New York-Heidelberg,
1975, Graduate Texts in Mathematics, No. 21.

\bibitem [KPZ15]{KPZ15}
S. Kovalenko. A. Perepechko, and M.Zaidenberg, \textit{On automorphism groups of affine surfaces}, Advanced Studies in Pure Mathematics 75,  Algebraic Varieties and Automorphism Groups
pp. 207--286.

\bibitem [Kr17]{Kr17} H. Kraft,
  \textit{Automorphism Groups of affine varieties and a characterization of affine $n$-space},   Trans. Moscow Math. Soc.78(2017), 171–186.










\bibitem [KL16]{KL16} F. Kutzschebauch and M. Leuenberger,
   \textit{Lie algebra generated by locally nilpotent derivations on Danielewski surfaces}, 
Ann. Sc. Norm. Super. Pisa Cl. Sci. (5) Vol. XV (2016), 183--207.


\bibitem [LR20]{LR20} 
M. Leuenberger and A. Regeta, \emph{Vector Fields and Automorphism Groups of Danielewski Surfaces}, to appear in  Int. Math. Res. Not., 
doi:10.1093/imrn/rnaa189

\bibitem [Lie10]{Lie10}
Alvaro Liendo, \emph{Affine T-varieties of complexity one and locally nilpotent derivations}, Transform. Groups 15 (2010),
no. 2, 389–425.

\bibitem [LRU22]{LRU18} 
A. Liendo, A. Regeta and C. Urech, \emph{Characterization of affine toric varieties by their automorphism groups},  arXiv:1805.03991 to appear in Ann. Sc. Norm. Super. di Pisa.

\bibitem [LRU20]{LRU19} 
 A. Liendo, A. Regeta and C. Urech, \emph{On the characterization of Danielewski surfaces by their automorphism group}, Transform. Groups, Vol.25, No. 4, 2020.



 
 
\bibitem [ML90]{ML90}
L. Makar-Limanov,  \textit{On groups of automorphisms of a class of surfaces}, Israel J. Math. Vol. 69, Issue 2,
(1990), 250--256.



 	
 	\bibitem [RvS21]{RvS21} 
A. Regeta and I. van Santen,  \textit{Characterizing Affine Toric Varieties via the Automorphism Group}, 
 arXiv:2112.04784 


\bibitem[Sh66]{Sh66} I.R. Shafarevich,
 \textit{On some infinite-dimensional groups},
Rend. Mat.  Appl. (5) 25 (1966),  208--212.

\bibitem[Sei67]{Sei67} A. Seidenberg, \textit{Differential ideals in rings of finitely generated type}, Amer. J. Math. 89(1967), 22–42.

\bibitem[Sie96]{Sie96}
T. Siebert, \textit{Lie algebras of derivations and affine algebraic geometry over fields of characteristic $0$}, Math. Ann. 305 (1996), no. 2, 271--286. 
\end{thebibliography}
\end{document}